\newtheorem{theorem}{Theorem}[section]
\newtheorem{lemma}[theorem]{Lemma}
\newtheorem{proposition}[theorem]{Proposition}
\newtheorem{definition}{Definition}
\newtheorem{example}[theorem]{Example}
\newtheorem{remark}{Remark}[section]
\def\vp{\varphi}
\def\eq#1{(\ref{#1})}
\def\nn{\nonumber}
\def\({\left(\begin{array}{cccccc}}
\def\){\end{array}\right)}
\def\eq#1{(\ref{#1})}
\def\nn{\nonumber}
\def\({\left(\begin{array}{cccccc}}
\def\){\end{array}\right)}
\def\bes{\begin{eqnarray}}
\def\ees{\end{eqnarray}}
\newcommand{\beq}{\begin{equation}}
\newcommand{\eeq}{\end{equation}}
\newcommand{\bea}{\begin{eqnarray}}
\newcommand{\eea}{\end{eqnarray}}
\newcommand{\beann}{\begin{eqnarray*}}
\newcommand{\eeann}{\end{eqnarray*}}
\newcommand{\lam}{\ensuremath{\lambda}}
\newcommand{\RR}{\mathbb{R}}
\newcommand{\br}{\bar r}
\newcommand{\pf}{\begin{proof}}
\newcommand{\foorp}{\end{proof}}
\DeclareMathOperator{\grad}{grad}
\DeclareMathOperator{\dv}{div}
\DeclareMathOperator{\supp}{supp}
\numberwithin{equation}{section}
\begin{document}

\title{On similarity flows for the compressible Euler system}

\begin{abstract}
Radial similarity flow offers a rare instance where concrete inviscid, multi-dimensional, 
compressible flows can be studied in detail. In particular, there are 
flows of this type that exhibit imploding shocks and cavities. In such flows 
the primary flow variables (density, velocity, pressure, temperature) become unbounded
at time of collapse. 
In both cases the solution can be propagated beyond collapse by having an expanding 
shock wave reflect off the center of motion.

These types of flows are of relevance in bomb-making and inertial confinement fusion, 
and also as benchmarks for computational codes; they have been investigated 
extensively in the applied literature. 
However, despite their obvious theoretical interest as examples of unbounded 
solutions to the multi-dimensional Euler system, the existing literature does 
not address to what extent such solutions are {\em bona fide} weak solutions.

In this work we review the construction of globally defined radial similarity shock 
and cavity flows, and give a detailed description of their behavior following collapse. 
We then prove that similarity shock solutions 
provide genuine weak solutions, of unbounded amplitude, to the multi-dimensional 
Euler system. However, both types of similarity flows involve regions of 
vanishing pressure prior to collapse (due to vanishing temperature
and vacuum, respectively) - raising the possibility that Euler flows may
remain bounded in the absence of such regions.
\end{abstract}

\author{Helge Kristian Jenssen}\address{H.\ K.\ Jenssen, Department of
Mathematics, Penn State University,
University Park, State College, PA 16802, USA ({\tt
jenssen@math.psu.edu}).}

\author{Charis Tsikkou}\address{C. Tsikkou, Department of
Mathematics, West Virginia University,
Morgantown, WV 26506, USA ({\tt
tsikkou@math.wvu.edu}).}

\date{\today}
\maketitle

\tableofcontents

\section{Introduction}\label{intro}
We consider two types of radial similarity flows 
for the compressible Euler system. These are particular types of
solutions with planar (slab), cylindrical, or spherical symmetry.\footnote{While all three types of flows are ``one-dimensional"
in  the sense that they depend on a single spatial variable $r$, we reserve this term for 
the  case of slab symmetry (i.e., the case when there is a fixed direction in physical 
space such that, at each fixed time, all flow quantities are constant in planes normal to this direction).} 
Under a similarity assumption the Euler system reduces to a coupled, 
nonlinear system of ODEs with respect to a similarity 
variable $x=t/r^\lambda$, where $t$ is time, $r$ is distance to the origin, and $\lambda$ is the 
similarity exponent.
 Similarity flows provide a rare instance where exact solutions
to the multi-dimensional compressible Euler system can be constructed ``by hand''
and studied in considerable detail. Following Guderley's pioneering 
study \cite{gud}, they have attracted substantial attention from physicists, 
engineers, and mathematicians. For a recent overview of the literature, see 
\cite{rkb_12} and references therein.

The existing literature provides examples of similarity flows 
where a single (spherical or cylindrical) incoming shock wave propagates into a quiescent region
about the origin (i.e., the fluid there is at rest and at constant density 
and pressure). The shock strengthens as it approaches the origin and the shock speed becomes 
unbounded at the instance of collapse at the 
origin. (For convenience, the time of collapse is chosen as $t=0$.) 
One can construct a complete (similarity) solution for all later times as well by 
having a diverging shock wave reflect off the origin. A different type of
similarity solution describes the situation where a gas fills a spherical or cylindrical 
cavity (vacuum region) near the origin. Again, the speed of the fluid-vacuum interface blows 
up at collapse. Also in this case  
a global-in-time similarity solution can be constructed by inserting an 
outgoing shock after collapse. We refer to these two types of solutions 
as {\em similarity shock} and {\em similarity cavity flows}, respectively.

In either case the profiles for the fluid velocity, pressure, sound speed, and 
temperature at time of collapse are unbounded, with behavior given by negative 
powers of $r$ (in the cavity case, this applies also to the density profile).  
For this the similarity exponent must satisfy $\lambda>1$.

In Section \ref{eqns} we record the multi-dimensional (multi-d) Euler equations for compressible 
flow of an ideal and polytropic gas with adiabatic exponent $\gamma>1$, including its 
radial form. We also posit the form of the radial similarity solutions under consideration. 
Section \ref{Gud_soln} outlines the setup for each type of solutions and collects various properties 
(initial data, jump relations, characteristics, etc.) of the similarity solutions under consideration.

For the actual construction of physically relevant similarity solutions with these properties, 
we follow Lazarus \cite{L} who treats both shock and cavity flows. A complete breakdown 
of the various possibilities, including the key determination of allowed values of the 
similarity exponent $\lambda$, requires a detailed analysis and numerical calculations. 
Our main purpose of verifying that the Euler system admits unbounded weak solutions,
does not require a full breakdown of all the cases.
Instead, Section \ref{constr_sim_solns} outlines enough of this analysis to obtain {\em some}
cases of Euler flows with unbounded amplitudes. In particular, we restrict attention to the 
standard value of the similarity exponent $\lambda$. This is the so-called 
``analytic'' value, denoted $\lambda_{std}$ by Lazarus \cite{L}. See Section 
\ref{constr_sim_solns} for details, where we also describe how the solutions are 
propagated past collapse to yield complete (i.e., global-in-time), radial similarity flows.

The resulting, well-known, solutions can be studied in detail.
In particular, we deduce their asymptotic behavior at $x=+\infty$, which plays a key role 
in the analysis that follows. It turns out that the behavior of the resulting flows
after collapse is markedly different near the center of motion in the shock case 
and in the cavity case; see Section \ref{refl_shck}.
We also include a discussion to the effect that, at least among similarity flows,
the continuation beyond collapse appears to be uniquely determined for both 
types of flows.  
Note that all jump discontinuities appearing in these similarity flows are, 
by construction, entropy admissible: both the incoming and the reflected shocks are 
compressive.

We then turn to our main concern: to what extent these types of similarity flows 
represent genuine weak solutions of the original, multi-d compressible Euler system. 
As the similarity solutions are singular and suffer blowup of primary flow variables at 
the origin, it is not immediately clear in what sense the weak form is satisfied.
While some authors \cites{bk,L} have addressed the constraint of locally finite energy 
for the similarity flows under consideration, we are not aware of a complete 
analysis. Concentrating on similarity shock solutions, we demonstrate that the
flows constructed in the literature
are indeed {\it bona fide} weak solutions whenever the similarity exponent $\lambda$
satisfies the constraint $\lambda\leq\frac{n}{2}+1$, where $n$ is $2$ or $3$ for
cylindrical or spherical flow, respectively. The numerical values available in the 
literature indicate that the solutions corresponding to the particular value 
$\lambda_{std}$ always satisfy this constraint.

We shall show that the similarity shock solutions under consideration are {\it bona fide} weak 
solutions in the following sense: all terms occurring in the weak formulation of
the multi-d Euler system are locally integrable in space-time;  the amounts of mass, 
momentum, and energy within any fixed, compact spatial region change continuously 
with time (in particular, they are finite); and finally, the weak forms of the equations are satisfied.
(Their total mass, momentum and energy in all of space are 
not bounded; however, this could be arranged via suitable modifications away from the origin
without affecting the blowup behavior near the origin.)

We emphasize that we verify the weak form of the original, {\em multi-d} Euler system. 
Since the similarity solutions under consideration are radially symmetric, it is 
convenient to first derive the corresponding weak formulation for general radial solutions.
This requires some care as the latter formulation involves different types of 
``test functions'' for the different conservation laws. For completeness we include
the derivation of the radial weak form of the equations (see Definition 
\ref{rad_symm_weak_soln} and Proposition \ref{rad_md}; here we follow the 
analysis \cite{hoff} for radial  Navier-Stokes flow).

With these preparations, Section \ref{sim_weak_solns} provides the details 
of the proof that genuine multi-d weak solutions are obtained from the radial 
symmetry solutions.

\paragraph{\bf Discsussion}
The existence of singular flows suffering point-wise blowup of flow variables
is of obvious relevance in connection with the general Cauchy problem for the 
compressible Euler system. With the notable exception of small-variation data 
near a strictly hyperbolic state (Glimm \cite{glimm}), there is currently no general, 
global-in-time existence result available for the one-dimensional (1-d) Cauchy problem 
for hyperbolic systems. (See \cites{liu77,temple81} for extensions that cover 
certain types of large variation data specifically for the Euler system.)
In more than one space dimension the situation is bleaker, and symmetric 
flows offer a natural case to consider in isolation. 
For results on isothermal and isentropic radial flow with general data, 
see \cites{cp,cs,mmu}.

In view of the blowup exhibited by similarity shock and similarity cavity 
solutions, it would appear that any existence result, applying to ``general'' data, 
for the multi-d Euler system would necessarily have to involve 
unbounded solutions. However, one should be careful not to draw
too general conclusions on the basis of the similarity flows we study 
here. These are exceedingly special solutions, some aspects of which are 
borderline physical.
In particular, both types of flows involve regions of vanishing pressure prior 
to collapse.
In the case of a collapsing cavity this is due to the vacuum, and there is no reason why
the Euler model should provide an accurate description close to its collapse.
For the converging shock case, it turns out that the quiescent state into which the 
converging shock propagates, must necessarily be at zero pressure (due to vanishing 
temperature there) in order to generate an exact solution. In approximate 
treatments this amounts to a ``strong shock'' assumption.

For the case of an incoming shock, it is physically reasonable that a nonzero 
counter pressure would slow it down and possibly prevent unbounded 
amplitudes.
This would provide a mechanism to ``save'' the Euler model from actual 
blowup.\footnote{The situation for radial {\em isentropic} similarity flow 
(constant entropy throughout, disregarding the energy equation
\cite{daf}) does not contradict this picture. In that case
a converging similarity shock can propagate into  quiescent region 
only if $\lambda=1$;  no blowup of primary flow variables occurs, and the upstream pressure 
is strictly positive. The same applies to radial isothermal similarity flow.}
In particular, if indeed correct, this would 
show that the strong shock approximation fails to capture a crucial aspect of 
exact solutions near collapse of symmetric shock waves (blowup vs.\ no blowup of 
primary flow variables). We note that a number of works consider the effect of a positive 
counter pressure, e.g.\ \cites{ah,phpm,welsh,vrt} and 
references therein. However, while amplitude blowup is still present in these works, 
none of them provide {\em exact} weak solutions to the Euler system.

The conventional point of view appears to be that the blowup 
exhibited by radial similarity flows results from multi-d wave focusing, 
much like what occurs for radial solutions to the linear 3-d wave equation.
The remarks above raise the possibility that the unbounded amplitudes could 
be due to the presence of regions of vanishing pressure.
We are not aware of a definite argument one way or the other - 
possibly both effects are required to generate blowup in $L^\infty$.
Unfortunately, 1-d (slab symmetry) similarity flows do not help in assessing 
the situation: such solutions fail to generate physically 
acceptable flows; see Remark \ref{no_1_d_ex}.

\section{Equations}\label{eqns}
The full, multi-d Euler system for compressible gas flow is given by
\begin{align}
	\rho_t+\dv\left(\rho \vec u\right)&= 0\label{m_d_mass}\\
	\left(\rho \vec u \right)_t+\dv\left(\rho \vec u\otimes \vec u\right)+\grad p &= 0\label{m_d_mom}\\
	\Big[\rho e+\frac{\rho|\vec u|^2}{2}\Big]_t
	+\dv\Big[\Big(\rho e+\frac{\rho |\vec u|^2}{2}+p\Big)\vec u\Big]&= 0.\label{m_d_energy}
\end{align}
The variables are $\rho=$ density, $\vec u=$ fluid velocity, $p=$ pressure, $e=$ specific 
internal energy. 
Under the assumption of radial symmetry (i.e., all unknowns depend only on time $t$ and 
radial distance $r$ to the origin or an axis of symmetry, and 
$\vec u$ is purely radial), the system takes the form: ($u=|\vec u|$)
\begin{align}
	\left(r^m\rho\right)_t+\left(r^m\rho u\right)_r &= 0\label{mass}\\
	\left(r^m\rho u \right)_t+\left(r^m(\rho u^2+p)\right)_r &= mr^{m-1}p\label{mom}\\
	\Big(r^m\rho \Big[e+\frac{u^2}{2}\Big] \Big)_t
	+\Big(r^m\rho u\Big[e+\frac{u^2}{2}+\frac{p}{\rho}\Big]\Big)_r &= 0.\label{energy}
\end{align}
Here $r$ varies over $\RR^+$, subscripts denote differentiation, and $m=1,\, 2$ for flows with 
cylindrical or spherical symmetry, respectively. With $m=0$ and $r$ varying over $\RR$,
we have the one-dimensional Euler system.  
We consider an ideal, polytropic gas with equation of state
\beq\label{perf}
	p=(\gamma-1)\rho e=(\gamma-1)c_v\rho \theta,
\eeq
where $\gamma>1$ and $c_v$ are positive constants, and $\theta=$ temperature. 
The specific entropy $S$ is related to $p$ and $\rho$ by
\beq\label{entr}
	p\rho^{-\gamma}=\text{Constant}\cdot \exp\Big(\frac{S}{c_v}\Big).
\eeq
It is a consequence of the conservation laws above that $S$ remains constant along 
particle trajectories in smooth regions of the flow:
\beq\label{entrpy_eul}
	S_t+uS_r = 0.
\eeq
The sound speed $c$ is given by
\beq\label{sound}
	c^2:=\frac{\gamma p}{\rho}\equiv \gamma(\gamma-1)e,
\eeq
and with $u$, $c$, and $\rho$ as primary unknowns, the system 
\eq{mass}-\eq{energy} takes the form:
\begin{align}
	u_t+ uu_r +\frac{1}{\gamma\rho}(\rho c^2)_r&= 0\label{u}\\ 
	c_t+uc_r+\frac{(\gamma-1)}{2}c\Big(u_r+\frac{mu}{r}\Big)&=0\label{c}\\
	\rho_t+u\rho_r+\rho\Big(u_r+\frac{mu}{r}\Big) &= 0.\label{rho}
\end{align}
Following the notation and setup of Lazarus \cite{L}, we introduce the similarity 
coordinate
\beq\label{sim_coord}
	x=\frac{t}{r^\lam},
\eeq
where $\lambda$ is the similarity exponent (to be determined), and make the ansatz
\begin{align}
	u(t,r) &= -\frac{r}{\lam t}\ V(x)=-\frac{r^{1-\lam}}{\lam}\frac{V(x)}{x} \label{V}\\
	c(t,r) &= -\frac{r}{\lam t}\ C(x)=-\frac{r^{1-\lam}}{\lam}\frac{C(x)}{x} \label{C}\\
	\rho(t,r) &=r^\kappa R(x),\label{R}
\end{align}
where $\kappa$ is a constant.
We refer to solutions with this particular structure as {\em similarity flows}. 
Their relevance relies on the fact that they include physically 
meaningful flows where either symmetric shocks or cavities implode 
(converge, focus, collapse) at the origin. Similarity flows are determined via solutions to 
ODEs for $V$, $C$, and $R$. These are the {\em similarity ODEs} which we record 
in Section \ref{sim_ODEs} below. We stress that, differently from many other 
cases of similarity solutions, the similarity exponent $\lambda$ is not given a priori,
but must be determined as part of the solution.

\section{Similarity shock and similarity cavity solutions}\label{Gud_soln}
\subsection{Similarity shock solutions}\label{sim_shocks}
We shall first consider similarity flows where a single (spherical, cylindrical, 
or planar) shock moves toward the origin for negative times, and 
focuses at the origin at time $t=0$. Taking the existence 
of such similarity flows for granted for now, in this section we consider the 
Rankine-Hugoniot conditions, describe various constraints 
that should be met by physically relevant similarity flows, and describe
a particular (critical) characteristic which plays a central role in 
the construction of such flows.

First, the flows on both sides of the shock are assumed to be similarity  flows
with the same values of $\lam$, $\gamma$, and $\kappa$ in \eq{V}-\eq{R}.
We assume that the converging shock path is described by a constant 
value of the similarity variable $x$, say
\beq\label{path}
	x\equiv -1 \qquad\text{so that}\qquad r_{shock}=(-t)^\frac{1}{\lam},\quad t<0.
\eeq
We shall only consider situations where the shock reaches the origin 
with infinite speed, so that
\beq\label{constr_1}
	\lam>1.
\eeq
We follow \cite{L} and let subscripts $0$ and $1$ denote 
evaluation immediately ahead of and behind of the shock, 
respectively. The (exact) jump relations and entropy condition 
then take the forms
\begin{align}
	1+V_1 &=\frac{\gamma-1}{\gamma+1}(1+V_0)
	+\frac{2C_0^2}{(\gamma+1)(1+V_0)}\label{V_jump}\\
	C_1^2 &= C_0^2+\frac{\gamma-1}{2}[(1+V_0)^2-(1+V_1)^2] \label{C_jump}\\
	R_1(1+V_1) &=R_0(1+V_0)\label{R_jump}\\
	C_0^2 &< (1+V_0)^2.\label{entropic}
\end{align}
Here \eq{entropic} expresses that the shock is supersonic relative to the 
state ahead; together these imply $C_1^2 > (1+V_1)^2$, amounting to
the admissibility of the similarity shocks.
The fluid on the inside (ahead) of the converging shock is assumed to be at rest and 
at constant density and pressure (quiescent state). According to \eq{R}, 
the constant density there dictates that $\kappa=0$ and $R(x)$ is constant;
for concreteness let
\[R(x)\equiv 1\qquad\text{for $-\infty<x<-1$.}\] 
Next, for an ideal gas $c^2\propto \frac{p}{\rho}$, so that 
the sound speed is constant in the quiescent region. 
As we assume $\lam\neq 1$, \eq{C} implies that $C$ must
vanishes identically there. As the fluid near the origin is assumed to 
be at rest, we therefore have
\[V(x)=C(x)\equiv 0 \qquad \text{for $-\infty<x<-1$.}\] 
We are thus considering a 
single, converging shock which moves into a quiescent region at 
zero pressure and unit density. For an ideal polytropic gas, this 
means that the temperature vanishes identically in the region 
inside the converging shock.

With $(V_0,C_0,R_0)=(0,0,1)$, inequality \eq{entropic} is satisfied, and the jump 
relations \eq{V_jump}-\eq{R_jump} give the following 
initial conditions for the similarity variables $V$, $C$, $R$ at $x=-1^+$:
\beq\label{init_data}
	V(-1)=V_1=-\frac{2}{\gamma+1},\qquad 
	C(-1)=C_1=\frac{\sqrt{2\gamma(\gamma-1)}}{\gamma+1},\qquad 
	R(-1)=R_1=\frac{\gamma+1}{\gamma-1}.
\eeq
Along the immediate outside of the converging shock, the primary 
flow variables are therefore given by \eq{V}-\eq{R} as (recall that
$\kappa=0$ in the present shock case):
\beq\label{u_c_rho_at_skock}
	u=\frac{V_1}{\lam}r^{1-\lam}\qquad c
	=\frac{C_1}{\lam}r^{1-\lam}\qquad \rho\equiv R_1. 
\eeq
As we assume $\lam>1$, it follows that the velocity $u$ and 
the sound speed $c$ become unbounded along the outside 
of the shock as it collapses at the origin, while the density 
remains finite. (The same applies along any curve given by 
$x\equiv constant \in(-1,0)$.)

Next, we are only interested in solutions where the 
flow variables $u$, $c$ and $\rho$ are ``well behaved'' at 
any location $r>0$ at time $t=0$. 
In particular, for any fixed $r>0$ we require that 
$u(t,r)$ and $c(t,r)$ tend to finite limits as $t\to 0$, 
i.e., as $x\to 0$.
According to \eq{V} and \eq{C} we must therefore have that 
\beq\label{V/x_C/x-zero}
	\ell:=\lim_{x\to 0} \frac{V(x)}{x}\qquad \text{and}\qquad 
	L:=\lim_{x\to 0} \frac{C(x)}{x}\qquad\text{are finite,}
\eeq
Thus, in particular, we have
\beq\label{vc_zero}
	V(0)=C(0)=0.
\eeq
It then follows from \eq{V}-\eq{C} and \eq{V/x_C/x-zero} that, at time of collapse 
($t=0$), the radial flow speed $u$ and the sound speed $c$ blow up 
according to 
\beq\label{uc_0}
	u(0,r)=-\frac{\ell}{\lambda}r^{1-\lam}\qquad\text{and}\qquad c(0,r)=-\frac{L}{\lambda}r^{1-\lam},
\eeq
while the density is constant, $\rho(0,r)\equiv R(0)$. 
As a consequence, the pressure and temperature profiles at time of collapse 
blow up according to
\beq\label{ptheta_0}
	p(0,r),\, \theta(0,r)\, \propto \, r^{2(1-\lam)}.
\eeq

We point out that the limits in \eq{V/x_C/x-zero} will turn out to be non-zero and finite 
for the solutions constructed below. It follows that all three characteristic speeds
($u\pm c$ and $u$) are bounded at all points except at $(t,r)=(0,0)$.
In particular, all fluid particles, except the one at the origin, are located away from 
$r=0$ at time $t=0$; in other words, the solutions under consideration are not 
of ``cumulative'' type where all (or a part of) the mass concentrates at the origin at collapse 
(examples of such flows are given in \cites{kell,am}).

Next we note that,  by \eq{init_data}, 
\beq\label{init_over}
	C>1+V>0\qquad\text{at $x=-1$},
\eeq
while \eq{vc_zero} shows that the opposite inequality holds at $x=0$. 
Thus, for some critical $x_c\in (-1,0)$ we must have 
\[1+V(x_c)=C(x_c).\]
(For the solutions considered below, there is a unique critical value $x_c$.)
Now, to determine the full solution of the flow problem before collapse, 
we must integrate the similarity ODEs 
for $V(x)$, $C(x)$, and $R(x)$ for $x\in (-1,0)$, subject to the initial data in 
\eq{init_data}. It so happens that these ODEs are singular at points where $1+V=C$
(see \eq{V_sim2}-\eq{D}), and we have 
just seen that this must occur at some point $x_c\in (-1,0)$. 
The corresponding curve in the $(t,r)$-plane turns out to be a 1-characteristic for the 
corresponding Euler flow. (More generally, a calculation shows that the curve 
$x\equiv \bar x=constant$ is a 1-characteristic if and only if $1+V(\bar x)=C(\bar x)$.)
Passing through $x=x_c$ corresponds to crossing the {\em critical
1-characteristic}, i.e. the 1-characteristic that catches up with the converging shock 
as it collapses at the origin. See Figure 1.

\begin{figure}\label{Figure_1}
	\centering
	\includegraphics[width=9cm,height=8cm]{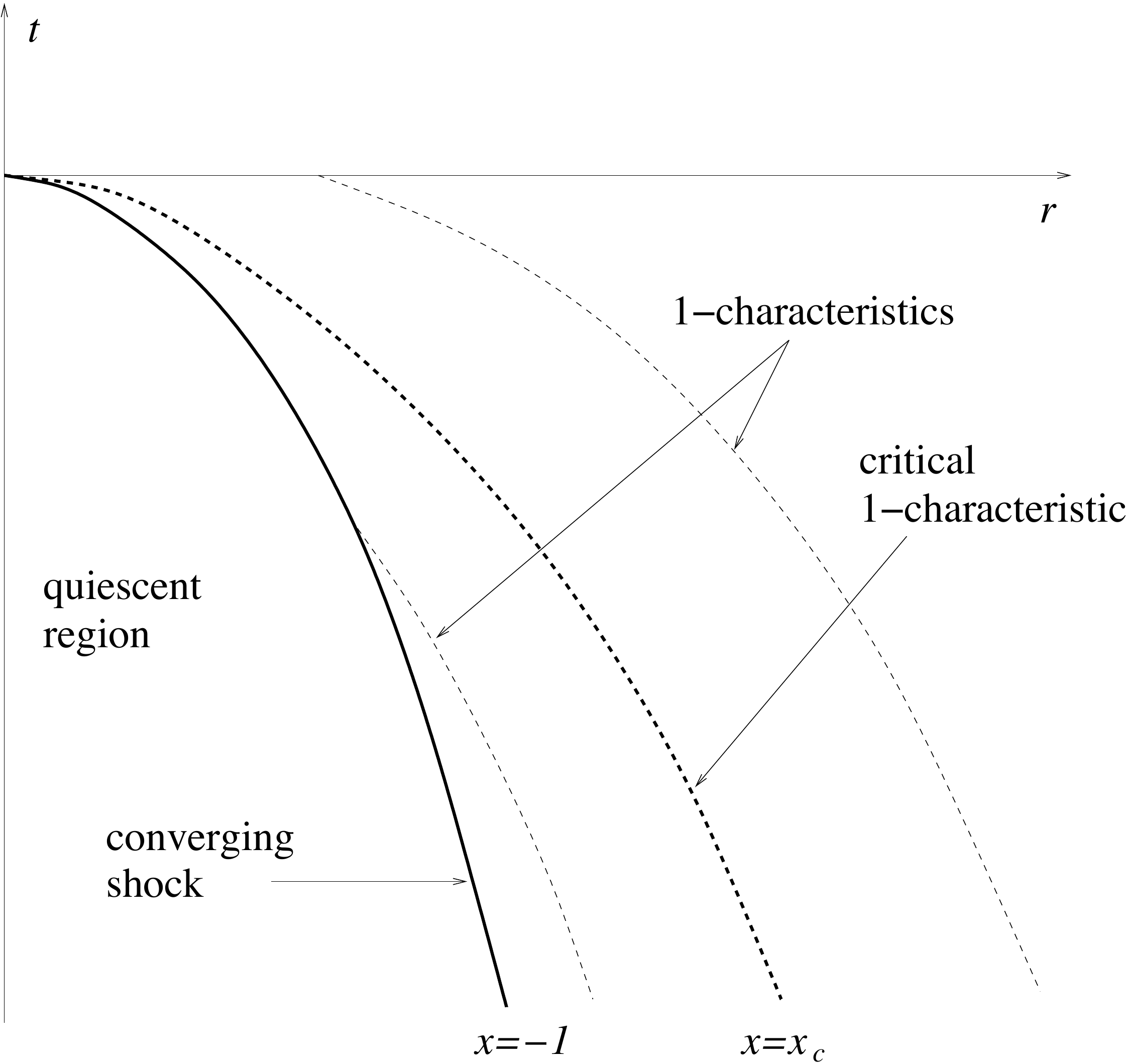}
	\caption{Converging similarity shock before collapse (schematic).}
\end{figure}

We point out that, in considering {\em weak} solutions, one should admit 
solutions with jumps in the derivatives of the flow variables across characteristics. 
In particular, $V$ and $C$ could enter and exit $x=x_c$ with different slopes. 
However, we shall not exploit this feature in the present work.

\subsection{Similarity cavity solutions}\label{sim_cavs}
For the case of a collapsing cavity we consider a spherical vacuum
region centered at the origin, surrounded by fluid moving radially inward. Assuming 
for now the existence of similarity flows \eq{V}-\eq{R} with this
structure, we assume that the vacuum-fluid interface 
follows the path $x=-1$ for negative times. Again we consider 
the case where this curve hits the origin 
with infinite speed at time $t=0$, so that $\lambda>1$. The interface 
is a particle trajectory, giving the initial condition for $V$ at $x=-1^+$ as
\beq\label{V_init_vac}
	V(-1)=-1.
\eeq
To select initial conditions for $R$ and $C$ at $x=-1$, 
we impose the further constraint that the entropy takes a fixed, 
constant value $\bar S$ throughout the fluid region for negative times (before a 
shock is reflected off the origin). The fluid pressure is then given 
by $A\rho^\gamma$, where $A=A(\bar S)$ is a constant. 
As the fluid pressure must vanish along the vacuum interface, it follows 
that the same holds for the density $\rho$, and also the sound speed 
$c=\sqrt{\gamma A\rho^{\gamma-1}}$. 
Equations \eq{C} and \eq{R} thus gives the initial conditions for
$C$ and $R$ at $x=-1^+$ as
\beq\label{CR}
	C(-1)=R(-1)=0.
\eeq
For later reference we note that isentropic similarity flow requires  
\beq\label{spec_kappa}
	\kappa=-\frac{2(\lambda-1)}{\gamma-1};
\eeq
this is a consequence of the momentum equation \eq{u} with 
$\rho c^2=\gamma A\rho^\gamma$, upon substituting for $u$ 
and $\rho$ from \eq{V} and \eq{R}, respectively.

It turns out that the similarity cavity flows constructed below
immediately leaves the starting point $(V,C)=(-1,0)$ by moving into the
region $C>1+V>0$. Just as for the shock case discussed above, 
we insist on ``well-behaved'' solutions satisfying \eq{V/x_C/x-zero}.
It follows that the cavity solution has to move back across the critical line
$\{C=1+V\}$, for some $x_c\in(-1,0)$, before continuing on toward 
the origin.

We note that, in contrast to the case of a similarity shock, 
in similarity cavity flow only the fluid velocity $u$ blows up along the 
curve $x\equiv -1$, while $c$, $\rho$, $p$, and $\theta$ all 
vanish there. On the other hand, \eq{V}-\eq{R} imply that all of
$u$, $c$, $\rho$, $p$, and $\theta$ blow up along all other 
curves $x\equiv constant \in (-1,0)$ as $t\uparrow 0$. (This last 
assertion requires that $V$ and $C$ does not vanish at any 
$x\in (-1,0)$; this will be the case for the similarity cavity flows 
constructed below.) Furthermore, the profiles for $u$, $c$, $p$,
and $\theta$ at time of collapse are again given by
\eq{uc_0}-\eq{ptheta_0} (provided the limits in \eq{V/x_C/x-zero} 
are non-zero, which holds for the cavity flows constructed below). 
Finally, for similarity cavity flow, also the density is unbounded 
at time $t=0$:
\[\rho(0,r)=R(0)r^\kappa,\] 
where $\kappa$, given by \eq{spec_kappa}, is strictly negative since 
$\lambda>1$. 

As the sound speed $c$ vanishes along the vacuum interface,
the characteristics degenerate there and become tangent to
the interface; a representative situation is recorded in Figure 2.
\begin{remark}
	It can be verified that the situation in Figure 2 is valid for 
	the cavity flows constructed below. In particular, \eq{dZdV}
	yields $C\sim \sqrt{1+V}$ near $x=-1$, and this implies that 
	any 1-characteristic between the interface $x=-1$ and the 
	critical characteristic $x=x_c$ will meet the interface at a 
	time strictly before collapse. It does so tangentially; at the 
	same point a 3-characteristic starts off tangentially into the 
	flow, as indicated. 
\end{remark}

\begin{figure}\label{Figure_2}
	\centering
	\includegraphics[width=9cm,height=8cm]{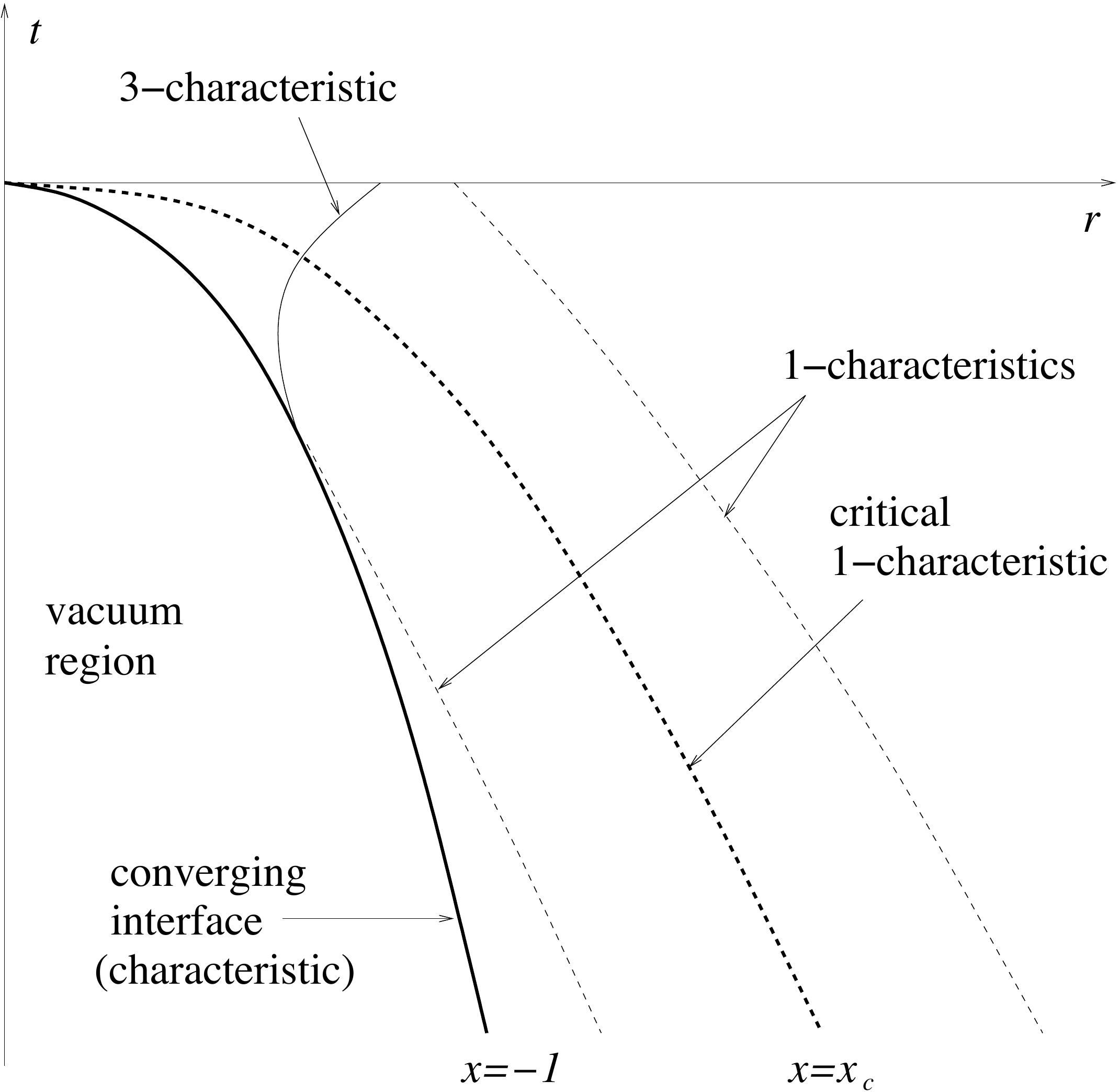}
	\caption{Similarity cavity flow before collapse (schematic).}
\end{figure}

\subsection{Similarity ODEs}\label{sim_ODEs}
Substituting \eq{V}-\eq{R} into \eq{u}-\eq{rho} we obtain a system of three
{\em similarity ODEs} for $V$, $C$, $R$. It is well-known that the constancy 
of specific entropy along particle trajectories provides one exact integral 
for the similarity ODEs (see \cite{rj}). Specifically, in any region where 
the flow is smooth, we have
\beq\label{exact_integral}
	R(x)^{q+1-\gamma}\left(\frac{C(x)}{x}\right)^2|1+V(x)|^q\equiv \text{constant},
\eeq
where 
\beq\label{power}
	q=\frac{\kappa(\gamma-1)+2(\lambda-1)}{\kappa+n},
\eeq
where $n=1,2,3$ is the spatial dimension. In the case of an incoming cavity,
the flow is isentropic for $t<0$, and $q$ vanishes according to \eq{spec_kappa}, 
while the right-hand side of \eq{exact_integral} is determined once the constant 
value $\bar S$ of the entropy is assigned. 

One can therefore obtain a closed
system for two of the unknowns, the standard choice being $V$ and $C$. 
The resulting ODEs are (see \cites{cf,L})
\begin{align}
	V'(x)&=-\frac{1}{\lambda x}\frac{G(V(x),C(x),\lam)}{D(V(x),C(x))}\label{V_sim2}\\
	C'(x)&=-\frac{1}{\lambda x}\frac{F(V(x),C(x),\lam)}{D(V(x),C(x))}\label{C_sim2}
\end{align}
where $'=\frac{d}{dx}$ and the polynomial functions $D$ and $G$, and the rational 
function $F$ are given by
\begin{align}
	D(V,C)&=(1+V)^2-C^2\label{D}\\
	G(V,C,\lam)&=C^2\left[nV+{\textstyle\frac{2(\lam-1)}{\gamma+s-1}}\right]-V(1+V)(\lam+V)\label{G}\\
	F(V,C,\lam)&=C\left\{C^2\left[1+{\textstyle\frac{s(\lam-1)}{\gamma(1+V)}}\right]
	-\left[1+{\textstyle\frac{(n-1)(\gamma-1)}{2}}\right](1+V)^2\right.\label{F}\\
	&\quad\qquad\left.+\left[{\textstyle\frac{(n-1)(\gamma-1)+(\gamma-3)(\lam-1)}{2}}\right](1+V)
	-{\textstyle\frac{(\gamma-1)(\lam-1)}{2}}\right\}. \nn
\end{align}
Here $s$ is a logical variable: $s=1$ for the shock case and $s=0$ for the cavity case.
Combining \eq{V_sim2} and \eq{C_sim2} we obtain a single ODE 
\beq\label{CV_ode}
	\frac{dC}{dV}=\frac{F(V,C,\lam)}{G(V,C,\lam)}
\eeq
relating $V$ and $C$ along similarity solutions.

\section{Construction of complete similarity flows}\label{constr_sim_solns}
In this section we discuss the existence of solutions to the similarity 
ODEs, and how these are used to build physically meaningful similarity 
shock and similarity cavity flows. We seek {\em complete} solutions defined for all times. 

The overall approach is, in principle, to 
solve \eq{CV_ode} for $C=C(V)$ with the appropriate initial data, and  
substitute the result into \eq{V_sim2}-\eq{C_sim2} to obtain 
$x$-parametrizations for $V=V(x)$ and $C=C(x)$ via quadrature. From these $R=R(x)$ 
can be determined from the exact integral in \eq{exact_integral}. For the discontinuous 
solutions under consideration, the Rankine-Hugoniot relations \eq{V_jump}-\eq{R_jump} 
are used. These will uniquely determine the value of the constant on the right-hand side of 
\eq{exact_integral} in each region where the solution is smooth. The 
original flow variables $\rho$, $u$, and $c$ are then given via \eq{sim_coord}-\eq{R}. 
Finally one needs to verify that the solution so obtained is 
physically acceptable. 

The analysis is complicated by the fact that the ODE \eq{CV_ode}
possesses a number of critical points (common zeros of $F$ and $G$), 
whose location varies with $\gamma$, $\lambda$, and $s$.
Furthermore, these may or may not be located on the critical 
lines 
\[\mathcal C_\pm:=\{C=\pm(1+V)\},\] 
along which the denominator $D$ in \eq{V_sim2} and \eq{C_sim2} vanishes. 
As discussed below, this is a key issue. 
Among the many treatments in the literature we find the work \cite{L} by Lazarus
to be the most useful for our needs. (Lazarus also studies solutions with several 
converging similarity shocks, a scenario we do not consider in the present work.)

The location of the initial data for $(V,C)$ at $x=-1$ implies that the
solutions of \eq{CV_ode} need to cross the critical line $\mathcal C_+$, 
before continuing on to the origin in the $(V,C)$-plane. 
Let 
\[\mathcal F:=\{(V,C)\,|\, F(V,C,\lambda)=0\},\qquad \mathcal F_\pm:=\mathcal F\cap\{C\gtrless 0\},\]
and define $\mathcal G$, $\mathcal G_\pm$ similarly by replacing $F(V,C,\lambda)$ by
$G(V,C,\lambda)$. As shown in \cite{L}, the set $\mathcal F\cap\mathcal G$ of critical 
points for \eq{CV_ode} can contain up to nine distinct points. One of these is $(V,C)=(-1,0)$, which
is the initial point for similarity cavity flow. In addition there may be up to two more critical points 
located on $\mathcal C_+$; we follow Lazarus' terminology and refer to these as points 6 and 8.
Now, a similarity flow must solve the full ODE system \eq{V_sim2} and \eq{C_sim2}. 
It follows from the form of these equations that any solution reaching the critical line $\mathcal C_+$, 
in order to continue on to the origin in the $(V,C)$-plane, must cross at a common zero of both 
$F$ and $G$. (Note that $F$ and $G$ are proportional along $\mathcal C_\pm$.)
It is this restriction that is used to determine what the relevant values of $\lambda$ can
be, for given values of $\gamma$, $n$, and $s$.

Lazarus \cite{L} provides a detailed analysis of the subtle issue of which 
$\lambda$-values give complete flows. In particular, Lazarus defines a function 
$\lambda_{std}=\lambda_{std}(\gamma,n,s)$ by the property that 
the solution of \eq{CV_ode}, with $\lambda=\lambda_{std}$ and starting at the 
appropriate initial point, passes {\em analytically} through point 6 or point 8.  
As pointed out in \cite{L}, most other 
authors have considered $\lambda_{std}$ to be the only physically relevant 
value of the similarity exponent.
Lazarus argues against this and shows that 
by removing the analyticity constraint one can, for fixed $\gamma$, 
$n$, and $s$, obtain whole families of complete similarity flows as $\lambda$ 
varies over certain non-trivial intervals. To obtain a complete breakdown of the 
possible cases requires numerical integration of the similarity ODEs. 
Most of the details of this analysis are included in \cite{L}. In particular,
the numerical values of $\lambda_{std}$ for $n=2,\, 3$ and $s=0,\, 1$ have 
been determined to several decimal places for a large number of 
$\gamma$-values (cf.\ Tables 6.2-6.5 in \cite{L}). According to Lazarus, 
``Numerically, it has been 
determined beyond question that it [i.e., the function $\lambda_{std}$] exists 
for the shock problem for all $\gamma>1$, and for the cavity problem for 
$\gamma>\gamma_{std}$.'' Here $\gamma_{std}$ depends on the spatial 
dimension and is approximately given by 2.9780 for $n=2$, and 2.4058
for $n=3$. In what follows we take these statements for granted.
Differently from many other cases of similarity solutions to PDEs, the similarity exponent $\lambda$ 
is not apriori given; no analytic expression for $\lambda_{std}$ is known.

Having determined those $\lambda$-values which gives relevant solutions 
to the similarity ODEs \eq{V_sim2} and \eq{C_sim2} for $x\in(-1,0)$, 
it remains to continue the solution through the origin and extend it to all 
$x>0$. As commented earlier, this is accomplished by inserting
an expanding similarity shock following a path of the form 
$r(t)=(\frac{t}{B})^\frac{1}{\lambda}$ for $t>0$ (i.e., $x\equiv B$, where 
$B>0$ is a constant).
The determination of $B$ and the construction of the solution for $x\in(B,\infty)$
are outlined in Section \ref{refl_shck} below; again, it appears necessary to 
do so through numerical integration of the equations. 

Having constructed a complete similarity shock or cavity solution in this manner,
it still remains to verify that the resulting flow is physically meaningful. 
This includes describing the solution behavior at the origin $r=0^+$ for 
$t>0$ (e.g., the velocity there should vanish), as well as checking that the 
mass, momentum, and total energy are locally bounded quantities. As we show in 
Section \ref{sim_weak_solns} (where we verify in detail that the similarity solutions
are genuine weak solutions to the Euler system),  the latter integral constraints require that the 
similarity exponent satisfies $\lambda<1+\frac{n}{2}$. It turns out that this is 
satisfied for all known values of $\lambda_{std}$ (cf.\ Tables 6.2-6.5 in \cite{L}).

While we agree with \cite{L} on the relevance of non-analytic similarity flows,
the more important point, for our purposes, is that we obtain {\em some} examples 
of shock and cavity flows that exhibit blowup. We therefore restrict attention to 
solutions corresponding to the ``analytic'' similarity exponent $\lambda_{std}$.

\subsection{Existence of similarity shock solutions prior to collapse}\label{existn_shock}
For the shock problem we first observe that, by construction, the converging shock
along $x=-1$ is compressive. The same holds for the diverging shock following
collapse. For the present 
case of an ideal gas, this implies that a fluid particle crossing the shock will suffer an increase 
in its physical entropy \cite{gr_96}; i.e., all discontinuities under consideration involving jumps of primary 
(undifferentiated) flow variables, are genuine, ``entropy-satisfying'' shocks.
Next, there is no issue near the 
initial point  $(V_1,C_1)$ given by the two first expressions in \eq{init_data}: 
the ODE \eq{CV_ode} is well behaved there and has a local solution 
for any values of $\lambda>1$ and $\gamma>1$. As outlined earlier, the solution must 
cross the critical line $\mathcal C_+=\{C=1+V\}$ before reaching $(V,C)=(0,0)$. 
As explained above we restrict attention to the particular value $\lambda=\lambda_{std}$
for which the solution crosses the critical line $\mathcal C_+$ in an analytic manner.
 
\begin{remark}\label{no_1_d_ex}
	The similarity ODEs \eq{V_sim2}-\eq{C_sim2} remain valid for $n=1$. 
	However, an analysis reveals that the solution starting out from 
	$(V_1,C_1)$ does not reach the critical line in this case, instead 
	ending at a critical point $(\bar V,\bar C)$ lying strictly above $\mathcal C_+$ 
	(this corresponds to  ``point 4'' in Lazarus' terminology \cite{L}).
	The same applies to the case of 1-d similarity cavity flow. 
	At $(\bar V,\bar C)$, $F(V,C)$ and $G(V,C)$ vanish and are Lipschitz continuous,
	while $D(V,C)$ does not vanish; therefore, the critical point is reached for $x=0$. 
	However, \eq{V} and \eq{C} then imply that the resulting flow is 
	physically meaningless at time of collapse in this case. 
	
	One could still attempt to build a 1-d flow exhibiting blowup by using only a 
	part of the similarity flow just described, say the part corresponding to 
	$x\in(-\infty,x_0)$, for an $x_0<0$. The idea would be to complete 
	the flow to all negative $x$, say, by a non-similarity 
	flow (e.g., a simple wave). However, any change made in the original similarity 
	flow for $x>x_0$ will necessarily influence the flow along the interface at $x=-1$, 
	{\em strictly} before $t=0$, and thus possibly prevent blowup. This is a 
	consequence of the fact that the original similarity solution does not reach the 
	critical line $\mathcal C_+$: there is no critical 1-characteristic in this case 
	(cf.\ Figure 2).
\end{remark}

After crossing the critical line the $\lambda_{std}$-solution approaches the origin 
$(V,C)=(0,0)$, which is a star point for \eq{CV_ode}. $F(V,C)$ and $G(V,C)$ both vanish and 
are Lipschitz continuous at the origin, while $D(V,C)$ does not vanish there.
It follows that the solution $(V(x),C(x))$ reaches the origin at $x=0$.
This critical point is again crossed in an analytic manner and the solution continues 
into the lower half of the $(V,C)$-plane; see Section \ref{refl_shck}.

\begin{remark}
	According to \eq{V/x_C/x-zero} the solution $(V(x),C(x))$ 
	approaches the origin with a slope $L/\ell$. For all cases we
	are aware of it is evident from numerical integration of the equations that 
	the limits in \eq{V/x_C/x-zero} are non-zero and finite. It follows from \eq{uc_0}
	that the flow in these cases is ``well-behaved'' and physically meaningful 
	at time of collapse.
\end{remark}

\subsection{Existence of cavity similarity solutions prior to collapse}\label{existn_cavity}
For the cavity  problem the initial point $(V,C)=(-1,0)$ for the ODE \eq{CV_ode} lies
on the critical line $\mathcal C_+=\{C=1+V\}$. This is a saddle point; a linearization about it in 
the variables $(V,Z=C^2)$ shows that there is a solution leaving along the 
direction 
\beq\label{dZdV}
	\frac{dZ}{dV}=\frac{\gamma(\gamma-1)(\lambda-1)}{n(\gamma-1)-2(\lambda-1)}.
\eeq
The solution $C(V)$ to \eq{CV_ode} therefore enters immediately the region 
$\{C>1+V>0\}$, provided $\lambda<1+\frac{n}{2}(\gamma-1)$, which we assume 
in what follows (for $s=0$).
\begin{remark}
	The corresponding solution $(V(x),C(x))$ of \eq{V_sim2}-\eq{C_sim2} 
	has $C(x)\to 0$ as $x\downarrow -1$. Note that 
	\eq{exact_integral} (with $q=0$) also gives $R(x)\to 0$ as $x\downarrow -1$.
	It follows that the density $\rho$ vanishes as the 
	interface $\{x=-1\}$ is approached from within the fluid. Therefore, 
	the constructed solution satisfies the physical boundary condition that 
	$p\propto \rho^{\gamma-1}$ vanishes along the vacuum interface.
\end{remark}
Further along the solution, the situation is similar to that for the shock case: the 
similarity exponent $\lambda$ must be chosen so that the solution of \eq{CV_ode} 
crosses the critical line $\mathcal C_+$ at a common zero of $F$ and $G$, i.e., through
one of the critical points labeled 6 or 8 in \cite{L}. 
Differently form the shock case, this will not occur for 
all values of $\gamma>1$. As noted earlier, for the cavity case, there is a minimal 
$\gamma_{std}(n)$ below which no value of $\lambda$ yields a solution with the 
required behavior.

After crossing the critical line $\mathcal C_+$, the situation is as in the 
shock case. The solution proceeds toward the origin in the 
$(V,C)$-plane, and passes through it in an analytical manner for $x=0$.

\subsection{Existence of similarity solutions beyond collapse; the reflected shock}\label{refl_shck}
The works \cites{L,RL_78,bg_96,RichtL_75,hun_60} consider the 
continuation of similarity shock and cavity solutions beyond collapse, 
to complete flows defined for all times. 
We are not aware of a general result addressing the unique continuation of solutions to
\eq{m_d_mass}-\eq{m_d_energy}, symmetric or not, for unbounded initial
data. On the other hand, it is reasonable to assume that no symmetry breaking 
occurs at time of collapse, and restrict attention to radial similarity flows with the 
same values of $\lambda$ and $\kappa$ also for $t>0$. 
Furthermore, the unbounded pressure distribution at time of collapse (cf.\ \eq{ptheta_0}) 
suggests searching for a solution in which an expanding shock wave is generated 
at the origin at time zero. 

Following \cites{L,RL_78}, we outline the construction of a reflected similarity 
shock propagating along a path $x=B=constant>0$. This 
shock will decay as it moves outward through the originally converging flow, leaving a 
non-isentropic flow region in its wake.
Providing a complete solution requires the continuation of the similarity solution 
$(V(x),C(x))$ of \eq{V_sim2}-\eq{C_sim2} found earlier beyond $x=0$, the determination 
of the reflected shock path (i.e., the value of $B$), and the solution of 
\eq{V_sim2}-\eq{C_sim2} for all $x>B$. The latter part of the solution provides the 
flow in the wake of the reflected shock; in particular, the asymptotic behaviors of $V(x)$ 
and $C(x)$ as $x\uparrow \infty$ yield the behavior of the flow variables at the center of motion 
($r=0$).

Continuing the solution $(V(x),C(x))$ through the star point (proper node) 
at the origin in the $(V,C)$-plane does not present any problem. This can be done in a 
unique analytic manner, and the solution $(V(x),C(x))$ is continued into the lower half-plane 
until it meets the critical line $\mathcal C_-=\{C=-1-V\}$.
Following \cite{L} we call this first part of the solution curve (in the lower half of 
the $(V,C)$-plane) ``arc (a).''

For each point $(\tilde V_0,\tilde C_0)$ on arc (a), we
then apply the Rankine-Hugoniot relations \eq{V_jump} and \eq{C_jump} to 
determine the unique point $(\tilde V_1,\tilde C_1)$, with $\tilde C_1<0$, 
to which the system can potentially jump.
(Recall that the form \eq{V_jump}-\eq{R_jump} of the Rankine-Hugoniot relations
assumes the discontinuity follows a ``similarity path'' $x=constant$, with the same 
values of $\lambda$, $\gamma$, and $\kappa$ on both sides of the discontinuity.)
As was noted in connection with \eq{V_jump}-\eq{R_jump}, since $\tilde C_0^2 < (1+\tilde V_0)^2$
along arc (a), the corresponding points $(\tilde V_1,\tilde C_1)$ necessarily 
lie below the critical line $\mathcal C_-$. 

As $x$ increases from $0$, the point $(\tilde V_0,\tilde C_0)\equiv (V(x),C(x))$ 
moves away from the origin along arc (a). At the same time the corresponding point 
$(\tilde V_1,\tilde C_1)$ traces out a certain simple curve; we follow \cite{L} and refer to 
it as the {\em jump locus} (of arc (a)).
(This jump locus is the smiley, dotted curve in the lower half plane indicated in Figure 3 below.)
According to \eq{V_jump}-\eq{C_jump} its left endpoint is $(V_1,-C_1)$ (corresponding to 
the point $(\tilde V_0,\tilde C_0)=(0,0)$), where 
$V_1$ and $C_1$ are given by \eq{init_data}. Its right end point lies on the critical 
line $\mathcal C_-$ and coincides with the end point of arc (a). 

At this stage, each point on the jump locus (except its endpoints) provides possible 
initial data for \eq{V_sim2}-\eq{C_sim2}, from which a solution trajectory 
should be continued for all $x>B$. 
The issue now is to argue that there is a unique point $(\hat V_1,\hat C_1)$ on the 
jump locus from which the solution can be continued to provide a
physically meaningful solution to \eq{m_d_mass}-\eq{m_d_energy}. 

A computation shows that the ODE \eq{CV_ode} has a critical point at $(V,C)=(V_0,-\infty)$, where
\beq\label{V_naught}
	V_0=-\frac{2(\lam-1)}{n(\gamma+s-1)}
\eeq
gives the vertical asymptote for the zero-level of $G(V,C,\lambda)$ in the $(V,C)$-plane.
This point corresponds to a saddle point at the origin in the variables $(v,\zeta)=(V-V_0,C^{-2})$.
There is therefore exactly one solution of \eq{CV_ode} which approaches the vertical 
asymptote $V=V_0$. Furthermore, it appears that this solution, when integrated in from infinity,
always lies entirely below the critical line $\mathcal C_-$, before intersecting the formerly determined
jump locus at a single point $(\hat V_1,\hat C_1)$. This solution trajectory is
referred to as ``arc (b).'' We then apply \eq{V_jump} and 
\eq{C_jump} to find the corresponding point $(\hat V_0,\hat C_0)$ on arc (a). 
The $x$-value $B$ at which the expanding shock is located is then determined by 
the condition that $(V(x),C(x))|_{x=B}=(\hat V_0,\hat C_0)$, where $(V(x),C(x))$ denotes the 
$x$-parametrization of arc (a). Modulo the $x$-parametrization 
of arc (b), this procedure determines the solution for all $x>0$, and provides a complete 
solution for both types of radial similarity flows. 

\begin{remark}\label{stagnation}
	As is evident from Figure 8.30 in \cite{L}, and explicitly pointed out in \cite{bg_96},
	for $\gamma\gtrsim3$ and $n=3$, the similarity shock solution suffers stagnation ($u=0$) 
	ahead of the reflected shock. In the phase plane this corresponds to the situation where 
	the solution $(V(x),C(x))$ moves along arc (a) into the left half plane 
	$\{V<0\}$ before jumping to arc (b).
\end{remark}

Before addressing the uniqueness of this solution, we
record how Lazarus \cite{L} obtains the $x$-parametrization of arc (b). 
First $V$ and $C$ are expanded 
in powers of the new independent variable $w=kx^{-\sigma}$, where $k$ and $\sigma>0$ 
are constants to be determined. With the ansatz 
\beq\label{V_Z_of_w}
	V(w)=\sum_{i=0}^\infty V_iw^i \qquad\text{and}\qquad C(w)=-\frac{1}{w}+\sum_{i=0}^\infty C_iw^i,
\eeq
substitution into \eq{V_sim2} and \eq{C_sim2} yields the value in \eq{V_naught} for $V_0$, and
\beq\label{sigma_z}
	\sigma=\frac{1}{\lambda}\Big[1+\frac{s(n-1)z}{1+V_0}\Big] \qquad 
	\text{where}\qquad z=\frac{\lambda-1}{(n-1)(\gamma+s-1)}.
\eeq
To integrate the ODE system in from the critical point $(V_0,-\infty)$ at infinity, Lazarus 
instead integrates the system for $V(w)$ and $C(w)$ from $w=0$, and thus obtains 
the $w$-parametrization of arc (b). This provides the value $w_1$ for which 
$(V(w),C(w))_{w=w_1}=(\hat V_1,\hat C_1)$, the point where arc (b) intersects
the jump locus of arc (a). As explained above, this determines, via the Rankine-Hugoniot relations 
\eq{V_jump}-\eq{C_jump} and the $x$-parametrization of arc (a), 
the location $x=B$ of the reflected shock. Finally, the $x$-parametrization of
arc (b) requires the determination of the constant $k$, which is now given by $k=B^\sigma w_1$.

\begin{example}
In Figure 3 we have used Maple to display the complete similarity shock
solution ($s=1$) in the $(V,C)$-plane for the case $n=\gamma=3$. 
We have used the values $\lambda=\lambda_{std}(3,3,1)\approx 
1.5713126233$ and $B\approx 0.693970$ given by Table 6.5 in \cite{L}
(see erratum in \cite{L_errat}).  The solution starts at the starred point above the critical line 
$\{C=1+V\}$, moves downward, crosses $\{C=1+V\}$ and the origin smoothly, 
and then crosses the critical line $\{C=-1-V\}$ by jumping, before continuing along arc (b) 
toward the critical point at $(V_0,-\infty)$. Note that, in accordance with 
Remark \ref{stagnation}, the first jump point, corresponding to the state ahead of the 
reflected shock, is close to $\{V=0\}$.
 
\begin{figure}\label{Figure_3}
	\centering
	\includegraphics[width=9cm,height=9cm]{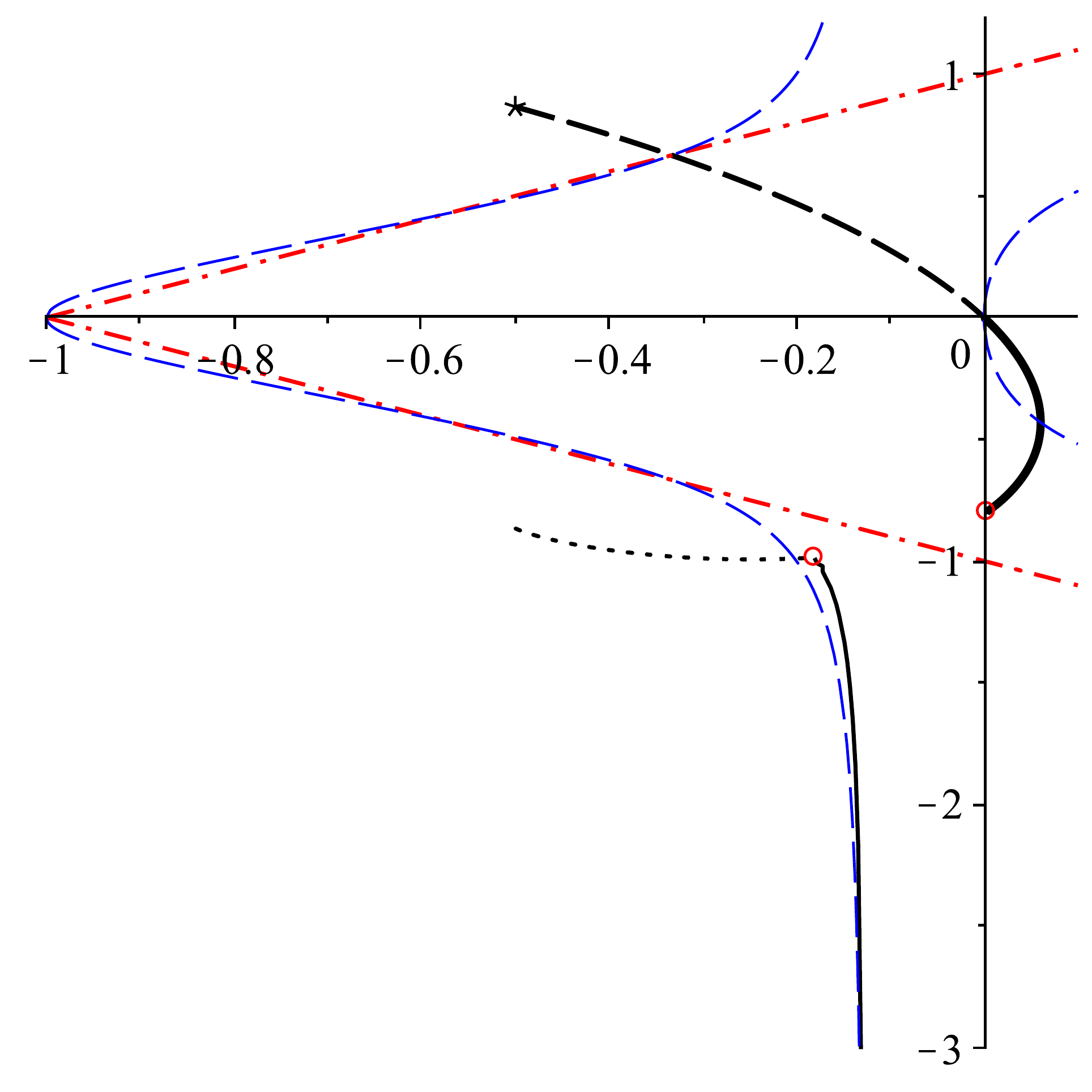}
	\caption{Complete trajectory of similarity shock solution ($n=\gamma=3$) in the $(V,C)$-plane. 
	Thick dash $=$ solution for $-1<x<0$, thick solid $=$ arc (a), dotted $=$ jump locus, solid $=$ arc (b),
	thin dash $=$ zero-level of $G(V,C)$, dash-dot $=$ critical lines, star $=$ starting point, circles $=$ 
	jump points.}
\end{figure}

\end{example}

We note that, according to \eq{V}, the physical requirement that the particle velocity $u(t,r)$
vanishes at the center of motion $r=0$ for all $t>0$, imposes the condition 
$V(x)/x^\frac{1}{\lambda}\to 0$ as $x\uparrow\infty$. 
Of course, this is satisfied for the solution determined above since $V(x)$ in that case 
tends to the finite limit $V_0$ as $x\uparrow\infty$. 

By combining the asymptotic behavior of $V(x)$ and $C(x)$ with the exact integral 
\eq{exact_integral} we obtain that of $R(x)$, and thus a complete description of the 
flow near the center of motion. A calculation shows that the result depends on 
the value of $s$; at any fixed time $t>0$ and as $r\downarrow 0$, we have:
\begin{itemize}
	\item[(O1)] for similarity cavity flow ($s=0$): $\rho(t,r)$, $p(t,r)$, and 
	$\theta(t,r)\propto c(t,r)^2$ all tend to nonzero constants (cf.\ Figures 8.19-8.22 in \cite{L});
	\item[(O2)] for similarity shock flow ($s=1$): $\rho(t,r)\to 0$, $p(t,r)$ 
	tends to a strictly positive constant, while $c(t,r)$ and 
	$\theta(t,r)$ both tend to $+\infty$ (cf.\ Figures 8.25-8.28 in \cite{L}).
\end{itemize}
(For a representative calculation, see the proof of Lemma \ref{prelim} below.) 
It is noteworthy that, in the case of similarity shock flow, the density vanishes at the 
center of motion after collapse, without the pressure tending to zero there. For the 
ideal gas under consideration, this 
yields unbounded temperature and sound speed
at $r=0$ for $t>0$. (This contradicts Lazarus' statement on p.\ 330 in \cite{L}
when $s=1$.)
In our view, this is another manifestation of the borderline physicality 
of the radial similarity solutions under consideration.

It remains to discuss the uniqueness of the solution determined above, which
was obtained by exploiting the critical (saddle) point $(V_0,-\infty)$ at infinity for 
the ODE  \eq{CV_ode}. Consider first similarity cavity flow ($s=0$), in which case
\eq{CV_ode} has critical points also at $(-\infty,-\infty)$ and at $(\infty,-\infty)$.
However, neither of these appear to be reachable from the jump locus of arc (a).
Indeed, from the phase portraits it appears that all solution trajectories $(V(x),C(x))$ 
starting from points on the jump locus lying to the left of $(\hat V_1,\hat C_1)$ end up 
(for a finite value of $x$) on the critical line $\mathcal C_+$, while all trajectories
starting from points on the jump locus lying to the right of $(\hat V_1,\hat C_1)$ end up
on $\mathcal C_-$. There is no way to continue these solutions to all $x>0$ 
and obtain complete, physically meaningful flows.

For the case of similarity shock flow ($s=1$), the ODE \eq{CV_ode} has an 
additional critical point at $(V,C)=(-1,-\infty)$ (due to the $(1+V)^{-1}$-term 
in $F(V,C,\lambda)$ in this case, cf.\ \eq{F}). From the phase portraits it appears 
that all solution trajectories $(V(x),C(x))$ starting from points on the jump locus 
lying to the left of $(\hat V_1,\hat C_1)$ approaches this point. (All trajectories
starting from points on the jump locus lying to the right of $(\hat V_1,\hat C_1)$ 
appear again to end up on $\mathcal C_-$ for finite $x$-values). Changing to the variables 
$(V,\frac{1}{C})$ and linearizing, reveals that the point $(V,C)=(-1,-\infty)$ is necessarily 
reached for a finite $x$-value, say $\check x$ (depending on where along the jump 
locus the solution started). According to \cite{RL_78}, this shows that the critical point 
$(-1,-\infty)$ cannot describe the physical state at $r=0^+$ for $t>0$ (since this 
corresponds to $x=+\infty$), and is therefore irrelevant. 
However, this does not resolve the issue completely. A calculation shows that if
$(V(x),C(x))$ of \eq{V_sim2}-\eq{C_sim2} tends to $(-1,-\infty)$ as $x\uparrow \check x$, 
then the density $\rho(t,r)$ at a fixed time $t>0$ will satisfy
\[\rho(t,r)\downarrow 0\quad\text{as}\quad r\downarrow 
\big(\textstyle\frac{t}{\check x}\big)^\frac{1}{\lambda};\]
that is, a vacuum is reached.
This solution structure is not unreasonable: one might well 
imagine an expanding vacuum region opening up in the wake of a strong, 
expanding shock (a possibility considered by Hunter \cite{hun_60} for 
the particular case of similarity cavity flow with $\gamma=7$). However, a further 
calculation reveals that the pressure $p(t,r)$ does {\em not} tend to zero as 
$ r\downarrow (t/\check x)^{1/\lambda}$ (for $t>0$ fixed).
This type of solutions is therefore rejected as unphysical.
	
While these observations do not provide rigorous proof, they support the view that the 
only way to obtain a complete and physically admissible solution, 
is by having $(V(x),C(x))$ approach the 
saddle point at $(V_0,-\infty)$ as $x\uparrow \infty$. It therefore appears that both similarity shock 
and similarity cavity solutions are uniquely determined beyond collapse - at least among 
similarity flows.

\section{Weak and radial weak Euler solutions}\label{weak_solns} 
We next consider whether the radial similarity solutions constructed
above, considered as function of time and space, provide weak solutions 
to the original multi-d Euler system \eq{m_d_mass}-\eq{m_d_energy}. 

For concreteness, in what follows, we focus on the case of similarity shock solutions, 
in which case the radial velocity, sound speed, pressure and temperature 
are unbounded at time of collapse, cf.\ \eq{uc_0}-\eq{ptheta_0}. 
The formulation and verification of the weak form of the equations 
therefore requires attention. Somewhat surprisingly this does not 
appear to have been addressed in the existing literature. 

In this section we formulate the weak form of the Euler system (in the 
absence of vacuum regions), first for general, multi-d solutions, and
then specialize it to the case of radial solutions.

\subsection{General, multi-d weak solutions}\label{multi-d_weak_solns} 

We write $\rho(t)$ for $\rho(t,\cdot)$ etc., $\vec u=(u_1,\dots,u_n)$, 
$u:=|\vec u|$, and let $z=(z_1,\dots,z_n)$ denote the spatial variable in $\RR^n$.
We restrict attention to non-vacuum solutions.
\begin{definition}\label{weak_soln}
	Consider the compressible Euler system \eq{m_d_mass}-\eq{m_d_energy}
	in $n$ space dimensions, with a given pressure function $p=p(\rho,e)\geq 0$,  
	and let the measurable functions $\rho,\, u_1,\dots,u_n,\, e:\RR_t\times \RR_z^n\to \RR$ 
	be given. We say that these constitute a (non-vacuum) weak solution to 
	\eq{m_d_mass}-\eq{m_d_energy} provided that:
	\begin{itemize}
		\item[(i)] the functions $\rho$ and $e$ satisfy $\rho(t,z)>0$ 
		and $e(t,z)\geq 0$ for a.a.\ $(t,z)\in \RR\times \RR^n$;
		\item[(ii)] the maps $t\mapsto \rho(t)$, $t\mapsto \rho(t) u(t)$,
		and $t\mapsto \rho(t)(e(t)+\frac{u(t)^2}{2})$ belong to $C(\RR_t;L^1_{loc}(\RR^n_z))$;
		\item[(iii)]  the functions $\rho u^2$, $p$, and
		$\big[\rho \big(e+\textstyle\frac{u^2}{2}\big)+p\big]u$ belong to $L^1_{loc}(\RR_t\times\RR^n_z)$;
		\item[(iv)] the conservation laws for mass, momentum, and energy are 
		satisfied weakly in sense that
		\begin{align}
			\int_\RR\int_{\RR^n} \rho\vp_t+\rho\vec u\cdot\nabla_{z}\vp
			\, dzdt &=0\label{m_d_mass_weak}\\
			\int_\RR\int_{\RR^n} \rho u_i\vp_t
			 +\rho u_i\vec u\cdot\nabla_{z}\vp+p\vp_{z_i}\, dzdt &=0
			 \qquad \text{for $i=1,\dots, n$} \label{m_d_mom_weak}\\
			\int_\RR\int_{\RR^n} \rho \big(e+\textstyle\frac{u^2}{2}\big) \vp_t
			+\left[\rho \big(e+\textstyle\frac{u^2}{2}\big)+p\right] 
			\vec u\cdot\nabla_{z}\vp\, dzdt &=0 \label{m_d_energy_weak}
		\end{align}
		whenever $\vp\in C_c^1(\RR_t\times \RR^n_z)$ (the space of $C^1$-smooth functions with compact support).
	\end{itemize}
\end{definition}
\begin{remark}
Note that we allow for the possibility that the density vanishes on sets of 
measure zero. 
This is relevant since, as noted above, the similarity shock solutions constructed earlier include a 
vacuum state at the center of motion after collapse.

Also, we do not address admissibility of weak solutions. While not the 
only possible approach, we consider the similarity shock solutions under 
consideration to be admissible since their discontinuities are, by construction,
compressive shocks in ideal gases.
\end{remark}

%


\subsection{Radial weak Euler solutions}\label{rad_weak_solns}
Next, for completeness we detail the relationship between weak solutions of the multi-d 
Euler system \eq{m_d_mass}-\eq{m_d_energy} and ``radial weak solutions'' of the radial 
version \eq{mass}-\eq{energy}. This analysis has been provided earlier by 
Hoff \cite{hoff} for radial solution of the compressible, isentropic Navier-Stokes 
system.  

Setting $m:=n-1$ we let 
\[\RR^+=(0,\infty),\qquad \RR_0^+=[0,\infty),\qquad 
L^1_{(loc)}(dt\times r^mdr)=L^1_{(loc)}(\RR\times\RR^+_0,dt\times r^mdr),\]
and $C^1_c(\RR\times\RR^+_0)$ denotes the set of real-valued functions 
$\psi(t,r)$ defined on $\RR\times\RR^+_0$ and with the property that $\psi$ is $C^1$ smooth 
on $\RR\times\RR^+_0$ and vanishes outside
$[-\bar t,\bar t]\times[0,\bar r]$ for some $\bar t,\, \bar r\in\RR^+$.
In particular, for any $\psi$ in $C^1_c(\RR\times\RR^+_0)$
the derivatives $\partial^l_t\partial_r^k \psi$ with $0\leq l+k \leq 1$ have well-defined (finite), 
continuous, and possibly non-vanishing, traces along the $t$-axis.
Finally, we let $C^1_0(\RR\times\RR^+_0)$ denote the set of functions 
$\psi\in C^1_c(\RR\times\RR^+_0)$ with the additional property that $\psi(t,0)\equiv 0$.

\begin{remark}\label{psi_0_rmk}
	It follows from this that for any $\psi\in C^1_0(\RR\times\RR^+_0)$, and any 
	compact time interval $[-T,T]$, there is a constant $A=A_{\psi,T}$ so that 
	\[|\psi(t,r)|\leq Ar\quad\text{for all $t\in[-T,T]$.}\]
\end{remark}

The relevance of these function classes is the following: when the weak 
formulation of the full multi-d Euler system 
\eq{m_d_mass}-\eq{m_d_energy} is applied to radial solutions, then the relevant 
``test functions'' for the radial continuity and energy equations 
will belong to $C^1_c(\RR\times\RR^+_0)$, while the relevant ``test functions'' for
the radial momentum equation will belong to $C^1_0(\RR\times\RR^+_0)$. 
Before verifying this we define ``radial weak solutions.''
\begin{definition}\label{rad_symm_weak_soln}
	Consider the radial version \eq{mass}-\eq{energy} of the compressible Euler 
	system \eq{m_d_mass}-\eq{m_d_energy}, where $(t,r)$ ranges over $\RR\times \RR^+$ 
	and $p=p(\rho,e)\geq 0$ is a given pressure function. 
	
	Let the measurable functions $\rho,\, u,\, e:\RR_t\times \RR^+_r\to \RR$  
	be given. We say that these constitute a (non-vacuum) radial weak solution to 
	\eq{mass}-\eq{energy} provided that:
	\begin{itemize}
		\item[(i)] the functions $\rho$ and $e$ satisfy $\rho(t,r)>0$ 
		and $e(t,r)\geq 0$ for a.a.\ $(t,r)\in \RR\times \RR^+$;
		\item[(ii)] the maps $t\mapsto \rho(t)$, $t\mapsto \rho(t)u(t)$, and 
		$t\mapsto \rho(t)(e(t)+\frac{u(t)^2}{2})$ belong to
		$C(\RR_t;L^1_{loc}(r^mdr))$;
		\item[(iii)]  the functions $\rho u^2$, $p$, and
		$\big[\rho \big(e+\textstyle\frac{u^2}{2}\big)+p\big]u$ belong to $L^1_{loc}(dt\times r^mdr)$;
		\item[(iv)] the conservation laws for mass, momentum, and energy are 
		satisfied in the distributional sense that
		\begin{align}
			\int_{\RR}\int_{\RR^+} \left(\rho\psi_t+\rho u\psi_r\right)\, r^mdrdt &=0 
			\qquad\forall \psi\in C^1_c(\RR\times\RR^+_0) \label{radial_mass_weak}\\
			\int_{\RR}\int_{\RR^+} \left(\rho u\psi_t
			+\rho u^2\psi_r+p\big(\psi_r+\textstyle\frac{m\psi}{r}\big)\right)\, r^mdrdt &=0 
			\qquad\forall \psi\in C^1_0(\RR\times\RR^+_0)\label{radial_mom_weak}\\
			\int_\RR\int_{\RR^+} \left(\rho \big(e+\textstyle\frac{u^2}{2}\big) \psi_t
			+\left[\rho \big(e+\textstyle\frac{u^2}{2}\big)+p\right] u\psi_r\right)\, r^mdrdt &=0
			\qquad\forall \psi\in C^1_c(\RR\times\RR^+_0).\label{radial_energy_weak}
		\end{align}
	\end{itemize}
\end{definition}
\begin{proposition}\label{rad_md}
	Consider the multi-d Euler system  \eq{m_d_mass}-\eq{m_d_energy} with a 
	given pressure function $p=p(\rho,e)$, together with its radially symmetric 
	version \eq{mass}-\eq{energy}. Then:
		given a radial weak solution 
		$(\tilde \rho, \tilde u, \tilde e)$ of \eq{mass}-\eq{energy},
		and setting 
		\beq\label{rad_to_gen_soln}
			\rho(t,z)=\tilde\rho(t,r)\qquad \vec u(t,z)
			=\tilde u(t,r)\frac{z}{r}\qquad e(t,z)=\tilde e(t,r)
			\qquad (r=|z|),
		\eeq 
		we obtain a  weak solution $(\rho,\vec u,e)$ of the multi-d Euler system 
		\eq{m_d_mass}-\eq{m_d_energy}.
\end{proposition}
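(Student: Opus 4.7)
The plan is to fix an arbitrary test function $\vp\in C_c^1(\RR_t\times\RR^n_z)$ and reduce each of the three multi-d weak identities \eqref{m_d_mass_weak}-\eqref{m_d_energy_weak} to its radial counterpart from Definition \ref{rad_symm_weak_soln} by integrating out the spherical variable in polar coordinates $z=r\omega$, $\omega\in S^{n-1}$, with Jacobian $dz=r^m\,d\sigma(\omega)\,dr$. The bookkeeping items (i)-(iii) of Definition \ref{weak_soln} transfer immediately: (i) is pointwise; and in (ii)-(iii) the polar change of variables together with Fubini converts every $\RR^n_z$-integral of a radial (in $z$) function into $|S^{n-1}|$ times an $r^m\,dr$-integral, which is controlled by the corresponding hypothesis on $(\tilde\rho,\tilde u,\tilde e)$.

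For mass and energy I would use the spherical mean
\[
\psi(t,r) := \int_{S^{n-1}} \vp(t,r\omega)\, d\sigma(\omega),
\]
which lies in $C_c^1(\RR\times\RR_0^+)$ and satisfies $\psi_t=\int\vp_t\,d\sigma$ and $\psi_r=\int\omega\cdot\nabla_z\vp\,d\sigma$. Substituting $\rho(t,z)=\tilde\rho$, $\vec u(t,z)=\tilde u\omega$, $e(t,z)=\tilde e$ into \eqref{m_d_mass_weak} and integrating over $S^{n-1}$ reduces the left-hand side to $\int\int(\tilde\rho\,\psi_t+\tilde\rho\tilde u\,\psi_r)\,r^m\,dr\,dt$, which vanishes by \eqref{radial_mass_weak}; the identical $\psi$ handles \eqref{m_d_energy_weak} via \eqref{radial_energy_weak}.

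The momentum equation is the delicate step. For each coordinate $i$ I would use the ``dipole average''
\[
\psi_i(t,r) := \int_{S^{n-1}} \omega_i\,\vp(t,r\omega)\, d\sigma(\omega).
\]
Because $\int_{S^{n-1}}\omega_i\,d\sigma=0$, we have $\psi_i(t,0)=0$ and so $\psi_i\in C_0^1(\RR\times\RR^+_0)$, exactly the class of test functions admitted in \eqref{radial_mom_weak}. Writing $u_i=\tilde u\omega_i$ and using $\partial_r[\vp(t,r\omega)]=\omega\cdot\nabla_z\vp$, the two transport contributions in \eqref{m_d_mom_weak} translate after integration over $S^{n-1}$ into $\tilde\rho\tilde u\,\partial_t\psi_i$ and $\tilde\rho\tilde u^2\,\partial_r\psi_i$ paired with $r^m\,dr\,dt$. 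For the pressure piece I would establish the identity
\[
\int_{S^{n-1}} \vp_{z_i}(t,r\omega)\, d\sigma(\omega) = \partial_r\psi_i(t,r) + \frac{m}{r}\,\psi_i(t,r)
\]
by applying the divergence theorem to the vector field $\vp\,e_i$ on the ball $B_r$, writing $\int_{B_r}\vp_{z_i}\,dz = r^m\int_{S^{n-1}}\omega_i\vp(t,r\omega)\,d\sigma = r^m\psi_i(t,r)$, and then differentiating in $r$. This is precisely the combination $\psi_r+m\psi/r$ appearing in \eqref{radial_mom_weak}, so the pressure integral becomes $\int\int p\,(\partial_r\psi_i+m\psi_i/r)\,r^m\,dr\,dt$, and \eqref{m_d_mom_weak} reduces to \eqref{radial_mom_weak} applied with test function $\psi_i$.

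The main obstacle lies in this momentum reduction. Two points require care: the zero-mean property $\int_{S^{n-1}}\omega_i\,d\sigma=0$, which alone forces $\psi_i$ into the smaller class $C_0^1$ that compensates for the singular factor $1/r$ in \eqref{radial_mom_weak}; and the integrability of $p\,m\psi_i/r$ near $r=0$, rescued by the linear bound $|\psi_i(t,r)|\leq Ar$ from Remark \ref{psi_0_rmk} combined with the $L^1_{loc}(dt\times r^m\,dr)$ hypothesis on $p$ from item (iii) of Definition \ref{rad_symm_weak_soln}. Once these are secured, the remainder of (iv) is term-by-term Fubini.
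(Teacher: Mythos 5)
Your proposal is correct and follows essentially the same route as the paper: the spherical mean \eq{psi_1} for the mass and energy equations, the dipole average \eq{psi_2} (with its zero-mean property placing it in $C^1_0(\RR\times\RR^+_0)$) for the momentum equation, and the identity $\left(r^m\psi_i\right)_r=r^m\int_{|y|=1}\vp_{z_i}(t,ry)\,dS_y$ obtained via the divergence theorem on the ball to handle the pressure term. No gaps.
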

\begin{proof}
	First, it is immediate that the properties in parts (i)-(iii) of Definition
	\ref{rad_symm_weak_soln}, together with \eq{rad_to_gen_soln}, 
	imply parts (i)-(iii) of Definition \ref{weak_soln}, respectively.
	It remains to verify the weak form of the equations.
%
%
%
	To verify \eq{m_d_mass_weak} we fix $\vp\in C_c^1(\RR\times \RR^n)$ and set
	\beq\label{psi_1}
		\psi(t,r):=\int_{|y|=1}\vp(t,ry)\, dS_{y}.
	\eeq
	Then $\psi\in C^1_c(\RR\times\RR^+_0)$ and \eq{radial_mass_weak} gives
	\begin{align*}
		0&=\int_{\RR}\int_{\RR^+} \left(\tilde \rho\psi_t+\tilde \rho \tilde u\psi_r\right)\, r^mdrdt\\
		&=\int_{\RR}\int_{\RR^+} \Big[\tilde \rho \int_{|y|=1}\vp_t(t,ry)\, dS_{y}
		+\tilde \rho \tilde u\int_{|y|=1}\partial_r\left(\vp(t,ry)\right)\, dS_{y}\Big]\, 
		r^mdrdt\\
		&=\int_{\RR}\int_{\RR^+}\int_{|y|=1} \Big[\tilde \rho\vp_t(t,ry)
		+\tilde \rho \tilde u \nabla_{z}\vp(t,ry)\cdot y\Big]\, r^m dS_{y}drdt
		=\int_\RR\int_{\RR^n} \rho\vp_t+\rho\vec u\cdot\nabla_{z}\vp \, dzdt,
	\end{align*}
	verifying the weak form \eq{m_d_mass_weak} of the continuity equation 
	\eq{m_d_mass} in the multi-d Euler system.
	
	Next, to verify \eq{m_d_mom_weak} we fix $i$ ($1\leq i\leq n$) and 
	$\vp\in C_c^1(\RR\times \RR^n)$, and set
	\beq\label{psi_2}
		\psi(t,r):=\int_{|y|=1}y_i\vp(t,ry)\, dS_{y}.
	\eeq
	Then $\psi\in C^1_0(\RR\times\RR^+_0)$ and \eq{radial_mom_weak} gives
	\beq\label{m2}
		\int_{\RR}\int_{\RR^+} \Big(\underbrace{\tilde \rho \tilde u\psi_t}_{I}
		+\underbrace{\tilde \rho \tilde u^2\psi_r}_{I\!I}
		+\underbrace{\tilde p\big(\psi_r+{\textstyle\frac{m\psi}{r}}\big)}_{I\!I\!I}\Big)\, r^mdrdt=0,
	\eeq
	where $\tilde p=p(\tilde \rho,\tilde e)$. Treating each term in turn, we have:
	\begin{align*}
		I&=\int_{\RR}\int_{\RR^+} \tilde \rho \tilde u\psi_t\, r^mdrdt
		=\int_{\RR}\int_{\RR^+} \tilde \rho \tilde u\Big[ \int_{|y|=1}
		y_i\vp(t,ry)\, dS_{y}\Big]_t\, r^mdrdt\\
		&=\int_{\RR}\int_{\RR^+}\int_{|y|=1} \tilde \rho \tilde u y_i \vp_t(t,ry)\, 
		r^m dS_{y}drdt
		=\int_\RR\int_{\RR^n} \rho u_i\vp_t \, dzdt,
	\end{align*}
	and
	\begin{align*}
		I\!I&=\int_{\RR}\int_{\RR^+} \tilde \rho \tilde u^2\psi_r\, r^mdrdt
		=\int_{\RR}\int_{\RR^+} \tilde \rho \tilde u^2\Big[ \int_{|y|=1}
		y_i\vp(t,ry)\, dS_{y}\Big]_r\, r^mdrdt\\
		&=\int_{\RR}\int_{\RR^+}\int_{|y|=1} 
		\tilde \rho \tilde u^2 y_i \nabla_{z}\vp(t,ry)
		\cdot y\,  r^m dS_{y}drdt
		=\int_\RR\int_{\RR^n} \rho u_i\vec u\cdot\nabla_{z}\vp \, dzdt.
	\end{align*}
	For $I\!I\!I$ we first calculate 
	\begin{align*}
		\left(r^m\psi\right)_r&=\partial_r\Big(r^m\int_{|y|=1} y_i\vp(t,ry)\, 
		dS_{y}\Big)
		= \partial_r\Big(\int_{|z|=r} \vp(t,z){\textstyle\frac{z_i}{|z|}}\, dS_{z}\Big)\\
		&= \partial_r\Big(\int_{|z|\leq r} \vp_{z_i}(t,z)\, 
		dz\Big)
		= \partial_r\Big(\int_0^r\int_{|y|=1} \vp_{z_i}(t,sy)\, s^m dS_{y}ds\Big)
		= r^m\int_{|y|=1}\vp_{z_i}(t,ry)\, dS_{y}.
	\end{align*}
	Using this we obtain that
	\begin{align*}
		I\!I\!I &=\int_{\RR}\int_{\RR^+} \tilde p\big(\psi_r+{\textstyle\frac{m\psi}{r}}\big)\, r^mdrdt
		=\int_{\RR}\int_{\RR^+} \tilde p\left(r^m\psi\right)_r\, drdt\\
		&=\int_{\RR}\int_{\RR^+}\int_{|y|=1} \tilde p \vp_{z_i}(t,ry) 
		r^m\, dS_{y}drdt
		=\int_\RR\int_{\RR^n} p\vp_{z_i} \, dzdt.
	\end{align*}
	Substituting these expressions for $I$, $I\!I$, and $I\!I\!I$ back into \eq{m2}, shows that the 
	weak form \eq{m_d_mom_weak} of the momentum equation \eq{m_d_mom} in the multi-d 
	Euler system is satisfied.
	
	Finally, to verify \eq{m_d_energy_weak} we fix $\vp\in C_c^1(\RR\times \RR^n)$ and again
	define $\psi(t,r)$ by \eq{psi_1}. Then $\psi\in C^1_c(\RR\times\RR^+_0)$ and 
	\eq{radial_energy_weak} gives
	\begin{align*}
		0&=\int_\RR\int_{\RR^+} \Big(\tilde \rho \big(\tilde e+{\textstyle\frac{\tilde u^2}{2}}\big) \psi_t
		+\Big[\tilde \rho \big(\tilde e+{\textstyle\frac{\tilde u^2}{2}}\big)+\tilde p\Big] \tilde u\psi_r\Big)\, 
		r^mdrdt\\
		&=\int_{\RR}\int_{\RR^+} \Big\{\tilde \rho \big(\tilde e+{\textstyle\frac{\tilde u^2}{2}}\big) 
		\int_{|y|=1}\vp_t(t,ry)\, dS_{y}\\
		&\qquad\qquad\qquad\qquad
		+\Big[\tilde \rho \big(\tilde e+{\textstyle\frac{\tilde u^2}{2}}\big)+\tilde p\Big] 
		\tilde u\int_{|y|=1}
		\partial_r\left(\vp(t,ry)\right)\, dS_{y}\Big\}\, r^mdrdt\\
		&=\int_{\RR}\int_{\RR^+} \int_{|y|=1}\Big\{\tilde \rho \big(\tilde e
		+{\textstyle\frac{\tilde u^2}{2}}\big) \vp_t(t,ry)\\
		&\qquad\qquad\qquad\qquad
		+\Big[\tilde \rho \big(\tilde e+{\textstyle\frac{\tilde u^2}{2}}\big)+p\Big] \tilde u
		y\cdot \nabla_{z}\vp(t,ry)\Big\}\, r^mdrdt\\
		&=\int_{\RR}\int_{\RR^n} \rho \big(e+{\textstyle\frac{u^2}{2}}\big) \vp_t
		+\Big[\rho \big(e+{\textstyle\frac{u^2}{2}}\big)+p\Big]\vec u\cdot \nabla_{z}\vp\, 
		dzdt,
	\end{align*}
	verifying the weak form \eq{m_d_energy_weak} of the energy equation 
	\eq{m_d_energy} in the multi-d Euler system.
\end{proof}
\begin{remark}
	Note that the ``test function'' $\psi$ in \eq{psi_1} typically has non-vanishing trace 
	along the $t$-axis (e.g., when $n=3$, $\psi(t,r)\to 4\pi \cdot \vp(t,0)$ as $r\downarrow 0$), 
	while its $r$-gradient does vanish as $r\downarrow 0$.
	Also, the ``test-function'' $\psi$ in \eq{psi_2} behaves in 
	the opposite manner: $\psi(t,r)\to 0$ as $r\downarrow 0$, while typically 
	$\psi_r(t,r)\not\to 0$ as $r\downarrow 0$.
\end{remark}

\section{Similarity shock solutions as radial weak solutions}\label{sim_weak_solns} 
In this section we return to the case of an ideal gas and consider 
the similarity shock solutions constructed in Section \ref{constr_sim_solns} 
as candidates for weak solutions of the Euler system. The main result 
is that these provide {\em bona fide} weak solution that suffer blowup 
of primary flow variables at collapse. This conclusion holds for flows in two 
and three space dimensions provided the similarity shock solution $(R(x),V(x),C(x))$ satisfies 
the properties listed in (P1)-(P3) below. We stress that numerical computations 
clearly indicate that these properties are satisfied for the ``standard'' similarity 
solutions with $	\lambda=\lambda_{std}(\gamma,n,1)$, for a large range of 
$\gamma$-values (see Tables 6.4-6.5 in \cite{L}).
\begin{itemize}
	\item[(P1)] the function $1+V(x)$ is uniformly bounded below away from zero, 
	and from above, as $x$ varies over all of $\RR$;
	\item[(P2)] the limits $\ell$ and $L$ in \eq{V/x_C/x-zero} satisfy $-\infty<L<0<\ell<\infty$;
	\item[(P3)] $(V(x),C(x))\to(V_0,-\infty)$ as $x\uparrow \infty$, where $V_0$ is given by \eq{V_naught}.
\end{itemize}
We now fix $n=2$ or $n=3$ and let $s=1$, such that $\kappa$ in \eq{R} vanishes,
and $\rho(t,r)=R(x)$. 
\begin{lemma}\label{prelim}
	With $n=2$ or $3$, and $1<\lambda<1+\frac{n}{2}$, assume (P1)-(P3) are satisfied for the 
	solution $(R(x),V(x),C(x))$ under consideration.
	Then $R(x)>0$ for all $x\in\RR$, the functions $R(x)$, $V(x)$, $V(x)/x$ are globally 
	bounded on $\RR$, and the functions $R(x)$, $V(x)/x$, $C(x)/x$ are continuous 
	at $x=0$. Finally, the function $R(x)(C(x)/x)^2$ is globally bounded. 
\end{lemma}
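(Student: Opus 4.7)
The plan is to treat the four smooth pieces of the similarity flow separately---the quiescent region $x\in(-\infty,-1)$, the analytic arcs $x\in(-1,0)$ and $x\in(0,B)$ joined smoothly through the star point at $(V,C)=(0,0)$, and the post-reflection tail $x>B$ governed by arc (b)---and to reduce every claim about $R(x)$ to properties of $V$ and $C/x$ via the exact integral \eqref{exact_integral}, which on each smooth piece reads
\[
R(x)^{q+1-\gamma}\bigl(C(x)/x\bigr)^{2}|1+V(x)|^{q}=K_i,
\]
with $q=2(\lambda-1)/n$ and a positive constant $K_i>0$ fixed by evaluating at $x=-1^+$, at $x=0$, and at $x=B^+$, where $R$ is already known to be positive.

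For strict positivity of $R$, I would first argue from the phase-plane picture of Section \ref{refl_shck} that $C(x)/x$ is strictly negative on each of the three nontrivial smooth intervals (positive $C$ and negative $x$ on $(-1,0)$; negative $C$ and positive $x$ on $(0,B)$ and $(B,\infty)$), so together with (P1) the right-hand side of the displayed exact integral is strictly positive, forcing $R>0$. Boundedness of $V$ then follows directly from (P1); boundedness of $V/x$ follows by piecing together $V/x\equiv 0$ on $(-\infty,-1)$, continuity on each compact sub-interval, the finite limit $V/x\to\ell$ at $x=0$ given by (P2), and the limit $V/x\to 0$ as $x\uparrow\infty$ coming from (P3) together with $V\to V_0$.

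Continuity of $V/x$, $C/x$, and $R$ at $x=0$ is where (P2) plays its central role: since $(V,C)$ passes analytically through the star point at the origin, the ratios $V/x$ and $C/x$ have common two-sided finite limits $\ell$ and $L$, with $L\neq 0$; and because the exact integral holds with the \emph{same} constant $K$ on $(-1,0)$ and on $(0,B)$ (no jump occurs at $x=0$), $R(x)$ inherits a common limit at $x=0$ from both sides. Global boundedness of $R$ is then obtained by continuity on the compact intervals $[-1,0]$ and $[0,B]$, combined with an asymptotic analysis on arc (b) as $x\uparrow\infty$: using the series representation \eqref{V_Z_of_w}--\eqref{sigma_z}, one has $C\sim-x^{\sigma}/k$ and $V\to V_0$, and the exact integral yields $R\sim\mathrm{const}\cdot x^{2(1-\sigma)/(q+1-\gamma)}$, in line with observation (O2) that $R\downarrow 0$.

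Finally, the bound on $R\cdot(C/x)^{2}$ is an immediate corollary: solving the exact integral for that product gives
\[
R(x)\,\bigl(C(x)/x\bigr)^{2}=K_i\,R(x)^{\gamma-q}\,|1+V(x)|^{-q},
\]
which is globally bounded by the boundedness of $R$ just established and the lower bound on $|1+V|$ from (P1). The main obstacle in the above plan is the asymptotic analysis of $R$ on arc (b) as $x\uparrow\infty$: one must check, using \eqref{sigma_z} and the sign of $q+1-\gamma$, that the exponent $2(1-\sigma)/(q+1-\gamma)$ is non-positive under the hypotheses $1<\lambda<1+n/2$ and $s=1$, so that $R$ cannot blow up at infinity and the piecewise boundedness assembles to a global bound.
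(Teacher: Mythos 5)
Your proposal is correct and follows essentially the same route as the paper: (P1)--(P2) handle the claims about $V$, $V/x$, $C/x$, while the exact integral \eqref{exact_integral} together with the asymptotics \eqref{V_Z_of_w}--\eqref{sigma_z} at the saddle $(V_0,-\infty)$ gives positivity, continuity at $x=0$, and decay of $R$ (your algebraic derivation of the bound on $R(C/x)^2$ from the exact integral is a cosmetic variant of the paper's \eqref{aux}). The one computation you flag as an obstacle does close: substituting \eqref{sigma_z} gives $2(1-\sigma)/(q+1-\gamma)=-\frac{2}{\gamma-q}\bigl(1-\frac{1}{\lambda}\bigr)$ (the factor $q+1-\gamma$ cancels), which is negative because $\gamma-q=\gamma(1+V_0)>0$ under $\lambda<1+\frac{n}{2}$ --- precisely the paper's \eqref{reln}--\eqref{R_bnd}.
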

\begin{proof}
	Clearly, (P1) and (P2) imply global boundedness of $V(x)$, continuity of
	$V(x)/x$, $C(x)/x$ at $x=0$ (when the latter two functions are defined to 
	take values $\ell$ and $L$ there, respectively), and therefore also global 
	boundedness of $V(x)/x$.
	
	Next, linearization of the ODE \eq{CV_ode} about $(V_0,-\infty)$ 
	shows that the leading order behaviors of $V$ and $C$ there 
	are given by \eq{V_Z_of_w}-\eq{sigma_z}:
	\beq\label{asymp_vals}
		V(x)\sim V_0= -\frac{2(\lam-1)}{\gamma n}\qquad\text{and}\qquad 
		C(x)\sim -x^\sigma \qquad\text{as $x\uparrow\infty$},
	\eeq
	where 
	\beq\label{sigma_q_shock}
		\sigma=\frac{1}{\lambda}\Big(1+\frac{\lambda-1}{\gamma-q}\Big)\qquad\text{with}\qquad
		q=\frac{2(\lambda-1)}{n}.
	\eeq
	We note that the constraint $\lambda<1+\frac{n}{2}$ implies $-1<V_0<0$, and thus
	\beq\label{reln}
		\gamma-q\equiv\gamma(1+V_0)>0.
	\eeq
	Also recall that the function $R(x)$ takes the value $1$ for $x<-1$; a calculation 
	using the Rankine-Hugoniot relations \eq{V_jump}-\eq{R_jump} together 
	with \eq{exact_integral}, shows that $R(x)>0$ for all $x>-1$ as well. By \eq{exact_integral},
	the continuity of $V(x)$ and $C(x)/x$ at $x=0$ implies that of $R(x)$. 
	According to \eq{exact_integral} we also obtain
	\beq\label{R_bnd}
		R(x)\sim\big(\textstyle\frac{C(x)}{x}\big)^{-\frac{2}{q+1-\gamma}}\sim x^{-\frac{2}{\gamma-q}(1-\frac{1}{\lambda})}
		\qquad\text{as $x\uparrow\infty$.}
	\eeq
	Thus, according to \eq{reln}, we have that $R(x)$ tends to zero as $x\uparrow\infty$ (establishing the 
	first part of (O2) in Section \ref{refl_shck}); it is therefore globally bounded. Finally, a similar 
	calculation shows that 
	\beq\label{aux}
		R(x)\Big|\frac{C(x)}{x}\Big|^2\sim x^{-2(1-\frac{1}{\lambda})}.
	\eeq
	Together with the continuity of $C(x)/x$ at $x=0$, this shows that $R(x)(C(x)/x)^2$ is globally bounded.
\end{proof}

For the solution $(R(x),V(x),C(x))$ under consideration we now define $\rho$, $u$, $c$, and $e$
via \eq{V}-\eq{R} and \eq{sound}. 
\begin{theorem}\label{main_thm}
        With $n=2$ or $3$, and under the assumption that (P1)-(P3) hold, the triple $(\rho,u,e)$ constitutes a radial 
        weak solution to \eq{mass}-\eq{energy}, with ideal pressure 
        law \eq{perf}, according to Definition \ref{rad_symm_weak_soln} whenever
        \beq\label{max_lam}
        	1<\lambda<1+\textstyle\frac{n}{2}.
        \eeq
\end{theorem}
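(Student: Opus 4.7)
The plan is to verify clauses (i)--(iv) of Definition \ref{rad_symm_weak_soln} in turn, exploiting the self-similar structure of $(\rho,u,e)$ and the global bounds and asymptotics recorded in Lemma \ref{prelim} together with (P1)--(P3). Clause (i) is immediate: $R(x)>0$ for every $x\in\RR$, and $e=c^2/[\gamma(\gamma-1)]\geq 0$ by definition.

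For clause (iii), Lemma \ref{prelim} yields the global pointwise bounds
\[
|\rho|\leq C,\quad |\rho u|\leq C r^{1-\lambda},\quad |\rho u^2|,|p|\leq C r^{2(1-\lambda)},\quad |pu|,|\rho u^3|\leq C r^{3(1-\lambda)}.
\]
Integrating the $r^{2(1-\lambda)}$-bounds against $r^m\,dr\,dt$ on a bounded box $[-T,T]\times[0,\bar r]$ produces exactly the stated range $\lambda<(m+3)/2=1+n/2$. The naive $r^{3(1-\lambda)}$-bound on the energy flux is too coarse and would force the stricter $\lambda<(n+4)/3$; to recover the full range I split the integral into $\{|x|\leq 1\}$ and $\{|x|\geq 1\}$, and on the latter region use the refined asymptotics $|V(x)/x|\lesssim |x|^{-1}$ and $R(x)(C(x)/x)^2\sim |x|^{-2(\lambda-1)/\lambda}$ coming from (P3) and \eq{V_Z_of_w}--\eq{sigma_z}. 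These yield the anisotropic estimate $|u|\lesssim r/|t|$ on $\{|x|\geq 1\}$ and lead to an integrability threshold of $\lambda<(n+3)/2$ for the energy flux, which is implied by $\lambda<1+n/2$. Clause (ii) is then routine: continuity in $t$ for $t\neq 0$ follows from the translation continuity of the jump sets together with the pointwise bounds just recalled, and continuity at $t=0$ is handled by dominated convergence using the continuity of $R,V/x,C/x$ at $x=0$ (Lemma \ref{prelim}) and the same dominating functions.

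For clause (iv) the plan is an excision argument. Fix a test function $\psi$ supported in $[-T,T]\times[0,\bar r]$ and set $D_\epsilon:=\{(t,r)\in\supp\psi:\ r>\epsilon\}$. By construction $D_\epsilon$ is a finite disjoint union of open regions on each of which $(\rho,u,e)$ is a classical smooth solution of \eq{mass}--\eq{energy}, separated internally by the incoming and reflected shock curves $\{t=-r^\lambda\}$ and $\{t=Br^\lambda\}$ (the critical characteristics at $x=x_c$ and $x=0$ carry no discontinuity in the analytic case). Integration by parts in each subregion produces a vanishing interior term by the classical PDE; the line integrals along the two shock curves cancel pairwise by the Rankine--Hugoniot relations \eq{V_jump}--\eq{R_jump}; and there remains a single boundary integral along $\{r=\epsilon\}$. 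After the self-similar substitution $x=t/\epsilon^\lambda$ each such boundary integral takes the form
\[
\epsilon^{m+1}\int_{-T/\epsilon^\lambda}^{T/\epsilon^\lambda} K(x)\,\psi(x\epsilon^\lambda,\epsilon)\,dx,
\]
where $K$ is one of $R(V/x)$, $R(V/x)^2$, $R(C/x)^2$, or an analogous cubic combination of these factors. The asymptotic decay of $K$ at $|x|\to\infty$ from (P3) either makes the $x$-integral bounded on $\RR$ uniformly in $\epsilon$, or, in the cases where it is only slowly divergent, the Remark \ref{psi_0_rmk} estimate $|\psi(t,\epsilon)|\leq A\epsilon$ for $\psi\in C^1_0$ (which is also what controls the pressure piece $p\cdot m\psi/r$ appearing in \eq{radial_mom_weak}) absorbs the loss. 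In each of the three conservation laws the net boundary contribution is thus $O(\epsilon^{m+1})$ and vanishes as $\epsilon\downarrow 0$, yielding the three weak identities.

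The main obstacle is the integrability of the energy flux in clause (iii): the crude pointwise bound is insufficient and one is forced to exploit the finer asymptotic information at $|x|\to\infty$ supplied by (P3). Once this is in hand the boundary analysis in (iv) is essentially bookkeeping, provided the self-similar change of variable $x=t/\epsilon^\lambda$ is used systematically and the extra vanishing $|\psi|\leq A r$ afforded by the class $C^1_0$ is invoked for the momentum equation.
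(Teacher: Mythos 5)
Your overall strategy coincides with the paper's: verify (i)--(iv) of Definition \ref{rad_symm_weak_soln} directly, using the global bounds of Lemma \ref{prelim} and the asymptotics at $x=\pm\infty$ for (ii)--(iii), and for (iv) excise $\{r<\epsilon\}$, apply the divergence theorem on the smooth subregions, cancel the shock-line integrals via \eq{V_jump}--\eq{R_jump}, and show the residual flux through $\{r=\epsilon\}$ vanishes. The minor variations (dominated convergence in place of the paper's L'H\^opital argument at $t=0$; a split at $|x|=1$ with the refined decay of $V/x$ and $R(C/x)^2$ in place of the paper's Fubini computation in the $x$-variable) are legitimate and, if anything, slightly cleaner for clause (ii).

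There is, however, one concrete error in your treatment of clause (iv). The boundary integrals along $\{r=\epsilon\}$ are \emph{not} all of order $\epsilon^{m+1}$: writing the flux in similarity form, the mass flux $\rho u$ scales like $\epsilon^{1-\lambda}$, the momentum flux $\rho u^{2}+p$ like $\epsilon^{2(1-\lambda)}$, and the energy flux $[\rho(e+\tfrac{u^{2}}{2})+p]u$ like $\epsilon^{3(1-\lambda)}$, so after the substitution $t=x\epsilon^{\lambda}$ the three boundary terms carry prefactors $\epsilon^{n}$ (or $\epsilon^{n-\lambda}$ if one only uses boundedness of $R\,V/x$), $\epsilon^{n+1-\lambda}$, and $\epsilon^{n+2-2\lambda}$, respectively --- your claimed universal factor $\epsilon^{m+1}$ is correct only for the mass equation. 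This matters because the energy term $O(\epsilon^{n+2-2\lambda})$ is exactly borderline: it tends to zero if and only if $\lambda<1+\frac{n}{2}$, so the hypothesis \eq{max_lam} is consumed precisely here (and in the momentum term after invoking Remark \ref{psi_0_rmk}), whereas your accounting makes all three terms look harmlessly small and hides where the constraint bites. The statement ``the net boundary contribution is $O(\epsilon^{m+1})$'' is therefore false as written for the momentum and energy equations, though the conclusion survives once the exponents are corrected. Two further small points: the threshold produced by the naive $r^{3(1-\lambda)}$ bound on the energy flux is $\lambda<(n+3)/3$, not $(n+4)/3$ (this does not affect your conclusion that the naive bound is insufficient); and you should state explicitly that the contribution of the excised strip $\{r<\epsilon\}$ to each weak form tends to zero by the local integrability established in clause (iii) --- this is implicit in your setup but is a necessary half of the limit argument.
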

According to Proposition \ref{rad_md}, it follows that these solutions
provide (non-vacuum) weak solutions of the multi-d Euler system 
\eq{m_d_mass}-\eq{m_d_energy}, according to Definition \ref{weak_soln}, 
with unbounded amplitudes.

The proof of Theorem \ref{main_thm} is organized as follows. First,
part (i) of Definition \ref{rad_symm_weak_soln} is 
immediate from Lemma \ref{prelim} and the definitions of $\rho$ and $e$.
The next two subsections consider 
the continuity and integrability requirements in parts (ii) and (iii) of 
Definition \ref{rad_symm_weak_soln}, respectively. Subsection \ref{weak_forms} finishes the proof by
analyzing the weak form of the equations (part (iv) of Definition \ref{rad_symm_weak_soln}).

\subsection{Continuity and local integrability}\label{cont_and_loc_integr}
For a fixed $\br>0$ and with
\[M(t;\bar r):=\int_0^{\br} \rho(t,r)r^m\, dr,\qquad I(t;\bar r):=\int_0^{\br} \rho(t,r)|u(t,r)|r^m\, dr,\]
\[E(t;\bar r):=\int_0^{\br} \rho(t,r)e(t,r)r^m\, dr+\frac{1}{2}\int_0^{\br}\rho(t,r)u^2(t,r)r^m\, dr
=:E_P(t;\bar r)+E_K(t;\bar r),\]
the issue is to show that the maps $t\mapsto M(t;\bar r)$, $t\mapsto I(t;\bar r)$,
and $t\mapsto E(t;\bar r)$ are continuous at all times $t\in\RR$. 
Recall that the incoming and outgoing shock waves follow the paths 
$r=r_i(t)= (-t)^{1/\lambda}$ and $r=r_o(t)=(t/B)^{1/\lambda}$, 
respectively. In what follows we consider times $t$ small enough that $r_i(t)<\br$
if $t<0$ and $r_o(t)<\br$ if $t>0$. The calculations for the other cases are 
simpler and do not change the conclusions. We set
\[\alpha:=\frac{n}{\lambda}.\]

\subsubsection{Continuity of $M(t;\bar r)$}
For $t<0$ we have $\rho(t,r)=1$ for $0<r<r_i(t)$, such that
\beq\label{M_neg_t}
	M(t;\bar r)=\int_{0}^{r_i(t)} r^m\, dr+\int_{r_i(t)}^{\br} \rho(t,r)r^m\, dr
	=\frac{|t|^{\alpha}}{n}+\frac{1}{\lambda}|t|^{\alpha}
	\int_{-1}^{\frac{t}{\br^\lambda}}R(x)\, 
	\frac{dx}{|x|^{\alpha+1}},
\eeq
while for $t>0$ we have
\beq\label{M_pos_t}
	M(t;\bar r)=\Big[\int_0^{r_o(t)} + \int_{r_o(t)}^{\br} \Big] \rho(t,r)r^m\, dr
	= \frac{1}{\lambda}t^{\alpha}
	\Big[\int_{\frac{t}{\br^\lambda}}^B+\int_B^{\infty}\Big]R(x)\, 
	\frac{dx}{x^{\alpha+1}}.
\eeq
As $R(x)$ is globally bounded, the integrals in \eq{M_neg_t} and 
\eq{M_pos_t} are all finite, and  
$t\mapsto M(t;\bar r)$ is continuous at all times $t\neq0$.
For  $t=0$ we have
\beq\label{M_at_t_0}
	M(0;\bar r)=\frac{\br^{n}}{n}R(0).
\eeq
Observe that, as $R(x)$ is globally bounded, the second integral 
on the right-hand side of \eq{M_pos_t} and the first term on  the right-hand side of \eq{M_neg_t}
are of order $|t|^{\alpha}$, and thus vanish when $t\downarrow 0$ and $t\uparrow 0,$ respectively. 
Therefore, continuity from above at $t=0$ 
of $M(t;\bar r)$ follows once it is established that 
\[\frac{1}{\lambda}t^{\alpha}
	\int_{\frac{t}{\br^\lambda}}^B R(x)\, 
	\frac{dx}{x^{\alpha+1}}
	\to M(0;\bar r)\qquad\text{as $t\downarrow 0$.}\]
This may be verified by using L'H\^opital's rule and the continuity of the 
map $x\mapsto R(x)$ at $x=0$. The same argument shows that 
\[\frac{1}{\lambda}|t|^{\alpha}
	\int_{-1}^{\frac{t}{\br^\lambda}}R(x)\, 
	\frac{dx}{|x|^{\alpha+1}}
	\to M(0;\bar r)
	\qquad\text{as $t\uparrow 0$}\]
as well. Thus, the map $t\mapsto M(t;\bar r)$ is continuous 
at all times.

\subsubsection{Continuity of $I(t;\bar r)$}
For $t<0$ we have $u(t,r)=0$ for $0<r<r_i(t)$ such that
\beq\label{I_neg_t}
	I(t;\bar r)=\int_{r_i(t)}^{\br} \rho(t,r)|u(t,r)|r^m\, dr
	=\frac{1}{\lambda^2}|t|^{\alpha-1+\frac{1}{\lambda}}
	\int_{-1}^{\frac{t}{\br^\lambda}}R(x)\frac{|V(x)|}{|x|}\, 
	\frac{dx}{|x|^{\alpha+\frac{1}{\lambda}}},
\eeq
while for $t>0$ we have
\beq\label{I_pos_t}
	I(t;\bar r)=\Big[\int_0^{r_o(t)} + \int_{r_o(t)}^{\br} \Big] \rho(t,r)|u(t,r)|r^m\, dr
	= \frac{1}{\lambda^2}t^{\alpha-1+\frac{1}{\lambda}}
	\Big[\int_{\frac{t}{\br^\lambda}}^B+\int_B^{\infty}\Big]R(x)\frac{|V(x)|}{x}\, 
	\frac{dx}{x^{\alpha+\frac{1}{\lambda}}}.
\eeq
As $R(x)$ and $V(x)/x$ are globally bounded, and $\alpha+\frac{1}{\lambda}>1$ 
(by assumption \eq{max_lam}), the integrals in \eq{I_neg_t} and 
\eq{I_pos_t} are all finite, and $t\mapsto I(t;\bar r)$ is continuous 
at any time $t\neq0$. For $t=0$ we have, by property (P2) and 
with $\ell$ given by \eq{V/x_C/x-zero},
\beq\label{I_at_t_0}
	I(0;\bar r)=\frac{1}{\lambda}R(0)\ell\frac{\br^{n+1-\lambda}}{n+1-\lambda}.
\eeq
As the second term on  the right-hand side of \eq{I_pos_t} is of order 
$t^{\alpha-1+\frac{1}{\lambda}}$, and thus vanishes when $t\downarrow 0$ 
(by \eq{max_lam}), the continuity of $I(t;\bar r)$ from above at $t=0$ 
follows once it is established that 
\[\frac{1}{\lambda^2}t^{\alpha-1+\frac{1}{\lambda}}
	\int_{\frac{t}{\br^\lambda}}^B R(x)\frac{|V(x)|}{x}\, 
	\frac{dx}{x^{\alpha+\frac{1}{\lambda}}}
	\to I(0;\bar r)\qquad\text{as $t\downarrow 0$.}\]
This may be verified by using L'H\^opital's rule and the continuity of the 
map $x\mapsto R(x)\frac{|V(x)|}{x}$ at $x=0$. The same argument shows that 
\[\frac{1}{\lambda^2}|t|^{\alpha-1+\frac{1}{\lambda}}
	\int_{-1}^{\frac{t}{\br^\lambda}}R(x)\frac{|V(x)|}{|x|}\, 
	\frac{dx}{|x|^{\alpha+\frac{1}{\lambda}}}
	\to I(0;\bar r)
	\qquad\text{as $t\uparrow 0$}\]
as well. Thus, the map $t\mapsto I(t;\bar r)$ is continuous 
at all times.

\subsubsection{Continuity of $E(t;\bar r)$}
We consider first the kinetic energy
\[E_K(t;\br)=\frac{1}{2}\int_0^{\br} \rho(t,r)u^2(t,r)r^m\, dr,\] 
which is given for $t<0$ and $t>0$ by 
\beq\label{E_K_neg_t}
	E_K(t;\bar r)
	=\frac{|t|^{\alpha-2+\frac{2}{\lambda}}}{2\lambda^3}
	\int_{-1}^{\frac{t}{\br^\lambda}}R(x)\Big|\frac{V(x)}{x}\Big|^2\, 
	\frac{dx}{|x|^{\alpha-1+\frac{2}{\lambda}}}
\eeq
and
\beq\label{E_K_pos_t}
	E_K(t;\bar r)
	= \frac{t^{\alpha-2+\frac{2}{\lambda}}}{2\lambda^3}
	\Big[\int_{\frac{t}{\br^\lambda}}^B+\int_B^{\infty}\Big]R(x)\Big|\frac{V(x)}{x}\Big|^2\, 
	\frac{dx}{x^{\alpha-1+\frac{2}{\lambda}}}, 
\eeq
respectively. Global boundedness of $R(x)$ and $V(x)/x$, together with assumption 
\eq{max_lam}, imply that $t\mapsto E_K(t;\br)$ is finite 
and continuous whenever $t\neq 0$. 
Evaluating at time $t=0$ yields, thanks to \eq{max_lam},
\[E_K(0;\br)=\frac{1}{2\lambda^2}R(0)\ell^2\frac{\br^{n+2-2\lambda}}{n+2-2\lambda}.\]
As the second term on  the right-hand side of \eq{E_K_pos_t} is of order 
$t^{\alpha-2+\frac{2}{\lambda}}$, and thus vanishes when $t\downarrow 0$ 
(by \eq{max_lam}), the continuity of $E_K(t;\br)$ from above at $t=0$ 
follows once it is established that 
\[\frac{t^{\alpha-2+\frac{2}{\lambda}}}{2\lambda^3}
	\int_{\frac{t}{\br^\lambda}}^B R(x)\left|\frac{V(x)}{x}\right|^2\, 
	\frac{dx}{x^{\alpha-1+\frac{2}{\lambda}}}
	\to E_K(0;\bar r)\qquad\text{as $t\downarrow 0$.}\]
Again, this follows by continuity of $R(x)|V(x)/x|^2$ at $x=0$ 
and L'H\^opital's rule. The same argument applied to \eq{E_K_neg_t}
shows that $E_K(t;\bar r)$ tends to the same limit as $t\uparrow0$.
This shows that the map $t\mapsto E_K(t;\bar r)$ is continuous 
at all times.

Finally, consider the potential energy:
\[E_P(t;\br)=\int_0^{\br} \rho(t,r)e(t,r)r^m\, dr=\frac{1}{\gamma(\gamma-1)}\int_0^{\br} \rho(t,r)c^2(t,r)r^m\, dr,\] 
which is given for $t<0$ and $t>0$ by 
\begin{align}
	E_P(t;\bar r)
	=\frac{|t|^{\alpha-2+\frac{2}{\lambda}}}{\lambda^3\gamma(\gamma-1)}
	\int_{-1}^{\frac{t}{\br^\lambda}}R(x)\Big|\frac{C(x)}{x}\Big|^2\, 
	\frac{dx}{|x|^{\alpha-1+\frac{2}{\lambda}}}\label{E_P_neg_t}
\end{align}
and
\begin{align}
	E_P(t;\bar r)
	= \frac{t^{\alpha-2+\frac{2}{\lambda}}}{\lambda^3\gamma(\gamma-1)}
	\Big[\int_{\frac{t}{\br^\lambda}}^B+\int_B^{\infty}\Big]R(x)\Big|\frac{C(x)}{x}\Big|^2\, 
	\frac{dx}{x^{\alpha-1+\frac{2}{\lambda}}},\label{E_P_pos_t} 
\end{align}
respectively. Global boundedness of $R(x)$ and $C(x)/x$, together with assumption 
\eq{max_lam}, imply that $t\mapsto E_P(t;\bar r)$ is finite and continuous at all times $t\neq 0$. 
Evaluating at time $t=0$ yields, thanks to \eq{max_lam},
\[E_P(0;\br)=\frac{1}{\lambda^2\gamma(\gamma-1)}R(0)L^2\frac{\br^{n+2-2\lambda}}{n+2-2\lambda},\]
As the second term on the right-hand side of \eq{E_P_pos_t} is of order 
$t^{\alpha-2+\frac{2}{\lambda}}$(by \eq{max_lam}), the continuity of $E_P(t;\br)$ from above at $t=0$ 
follows once it is established that 
\[\frac{t^{\alpha-2+\frac{2}{\lambda}}}{\lambda^3\gamma(\gamma-1)}
\int_{\frac{t}{\br^\lambda}}^BR(x)\left|\frac{C(x)}{x}\right|^2\, 
\frac{dx}{x^{\alpha-1+\frac{2}{\lambda}}}\to E_P(0;\bar r)
\qquad\text{as $t\downarrow 0$.}\]
As above this follows by L'H\^opital's rule and the continuity of 
$R(x)|C(x)/x|^2$ at $x=0$. Finally, the continuity of $E_P(t,\br)$ 
from below at time $t=0$ is established in the same manner.

This concludes the verification of part (ii) of Definition \ref{rad_symm_weak_soln}.

\subsubsection{Local space-time integrability}
Next, for part (iii) of Definition \ref{rad_symm_weak_soln}, we need to verify the 
local integrability in time and space of the functions $\rho u^2$, $p$, and
$\big[\rho \big(e+\textstyle\frac{u^2}{2}\big)+p\big]u$. Recall that we consider 
an ideal gas \eq{perf}, and that the incoming and outgoing shocks propagate 
along $x=-1$ and $x=B$, respectively.  As a consequence, to verify part (iii) 
it suffices to show that, for any fixed $\bar r>0$, the space-time integrals
\[I_\beta(\bar r):=\int_{-\bar r^\lambda}^{B\bar r^\lambda}\int_0^{\bar r} 
\rho|u|^\beta r^m\, drdt,\qquad\text{for $\beta=2,\, 3$,}\]
and
\[P_\beta(\bar r):=\int_{-\bar r^\lambda}^{B\bar r^\lambda}\int_0^{\bar r} 
p|u|^\beta r^m\, drdt,\qquad\text{for $\beta=0,\, 1$,}\]
are finite. Transforming to $dxdt$-integrals, and recalling that the fluid is at 
rest on the inside of the incoming shock, we have
\begin{align}
	I_\beta(\bar r)&=\frac{1}{\lambda^{\beta+1}}
	\left\{\int_{-1}^B\frac{R(x)|V(x)|^\beta}{|x|^{\alpha+1+\frac{\beta}{\lambda}}}
	\Big[\int_0^{|x|\bar r^\lambda}t^{\alpha+\beta\left(\frac{1}{\lambda}-1\right)}\, dt\Big]\, dx\right.\nn\\
	&\qquad\qquad\qquad\left.+\Big[\int_{B}^\infty\frac{R(x)|V(x)|^\beta}{|x|^{\alpha+1+\frac{\beta}{\lambda}}}\, dx\Big]
	\Big[\int_0^{B\bar r^\lambda}t^{\alpha+\beta\left(\frac{1}{\lambda}-1\right)}\, dt\Big]\right\}\nn\\
	&=\frac{1}{\lambda^{\beta+1}}\frac{\bar r^{\lambda(\alpha+1)+\beta(1-\lambda)}}{(\alpha+1)+\beta\left(\frac{1}{\lambda}-1\right)}
	\left\{\int_{-1}^B\frac{R(x)|V(x)|^\beta}{|x|^\beta}\, dx+B^{\alpha+1+\beta\left(\frac{1}{\lambda}-1\right)}
	\int_{B}^\infty\frac{R(x)|V(x)|^\beta}{x^{\alpha+1+\frac{\beta}{\lambda}}}\, dx \right\}.\label{I_beta}
\end{align}
Here we have used that the $dt$-integrals are finite since, for all values of 
$\lambda$, $n$, and $\beta$ under consideration, \eq{max_lam} yields
\[\alpha+\beta\Big(\frac{1}{\lambda}-1\Big)>-1.\]
As $R(x)$, $V(x)/x$, and $V(x)$ are all globally bounded, it follows from \eq{I_beta} 
that $I_\beta(\bar r)<\infty$ for any value of $\bar r$ and $\beta=2$ or $3$.

A similar computation for $P_\beta(\bar r)$ (now using that the pressure $p$ 
vanishes on the inside of the incoming shock), yields 
\begin{align}
	P_\beta(\bar r)&=\frac{1}{\gamma\lambda^{\beta+3}}
	\left\{\int_{-1}^B R(x)\Big|\frac{C(x)}{x}\Big|^2\Big|\frac{V(x)}{x}\Big|^\beta
	\frac{1}{|x|^{\alpha+1+(2+\beta)(\frac{1}{\lambda}-1)}}
	\Big[\int_0^{|x|\bar r^\lambda}t^{\alpha+(2+\beta)\left(\frac{1}{\lambda}-1\right)}\, dt\Big]\, dx\right.\nn\\
	&\qquad\qquad\qquad\left.+\Big[\int_{B}^\infty R(x)\Big|\frac{C(x)}{x}\Big|^2\Big|\frac{V(x)}{x}\Big|^\beta
	\frac{dx}{|x|^{\alpha+1+(2+\beta)(\frac{1}{\lambda}-1)}}\Big]
	\Big[\int_0^{B\bar r^\lambda}t^{\alpha+(2+\beta)\left(\frac{1}{\lambda}-1\right)}\, dt\Big] \right\}\nn\\
	&=\frac{1}{\gamma\lambda^{\beta+3}}
	\frac{\bar r^{\lambda(\alpha+1)+(2+\beta)(1-\lambda)}}{(\alpha+1)+(2+\beta)\left(\frac{1}{\lambda}-1\right)}
	\left\{\int_{-1}^BR(x)\Big|\frac{C(x)}{x}\Big|^2\Big|\frac{V(x)}{x}\Big|^\beta\, dx\right.\nn\\
	&\qquad\qquad\qquad \left.+B^{\alpha+1+(2+\beta)\left(\frac{1}{\lambda}-1\right)}
	\int_{B}^\infty R(x)\Big|\frac{C(x)}{x}\Big|^2\Big|\frac{V(x)}{x}\Big|^\beta
	\frac{dx}{|x|^{\alpha+1+(2+\beta)(\frac{1}{\lambda}-1)}} \right\}.\label{P_beta}
\end{align}
Here we have used that the $dt$-integrals are finite since, for all values of 
$\lambda$, $n$, and $\beta$ under consideration, \eq{max_lam} yields
\[\alpha+(2+\beta)\left(\frac{1}{\lambda}-1\right)>-1.\]
By global boundedness of $R(x)$, $V(x)$, and $C(x)/x$, and by \eq{max_lam},
both integrals on the right-hand side of \eq{P_beta} are finite for both $\beta=0$ and $\beta=1$. 

This concludes the verification of part (iii) of Definition \ref{rad_symm_weak_soln},
under the constraint \eq{max_lam}.

\subsection{Weak form of the equations}
Finally, for part (iv) of Definition \ref{rad_symm_weak_soln}, we need to verify the
weak forms \eq{radial_mass_weak}, \eq{radial_mom_weak}, \eq{radial_energy_weak}
of the radial equations. This requires some care since the 
solutions under consideration are unbounded at the origin. To handle this 
we shall exploit that the local integrability properties in parts (ii) and (iii) of Definition 
\ref{rad_symm_weak_soln} have been verified under the condition \eq{max_lam}. 
The issue then reduces to estimating the fluxes of the conserved quantities
across spheres of vanishing radii. 

\subsubsection{Weak form of the mass equation}\label{weak_forms}
For a fixed $\psi\in C^1_c(\RR\times\RR^+_0)$, with $\supp\psi\subset[-T,T]\times [0,A]$,
and for any $\delta>0$, we have
\begin{align}
	M(\psi)&:=\int_{\RR}\int_{\RR^+} \left(\rho\psi_t+\rho u\psi_r\right)\, r^mdrdt 
	= \left\{\int_{\RR}\int_0^\delta +\iint_{I_\delta} +\iint_{I\!I_\delta}+\iint_{I\!I\!I_\delta}\right\}
	\left(\rho\psi_t+\rho u\psi_r\right)\, r^mdrdt \nn\\
	&=:M_\delta(\psi)
	+\left\{\iint_{I_\delta} +\iint_{I\!I_\delta}+\iint_{I\!I\!I_\delta}\right\}
	\left(\rho\psi_t+\rho u\psi_r\right)\, r^mdrdt
\end{align}
where the (open) regions $I_\delta$, $I\!I_\delta$, and $I\!I\!I_\delta$ are indicated in Figure 4
(e.g., $I_\delta$ is bounded below by $\{t=-T\}$, on the left by $\{r=\delta\}$, and on the right by
the incoming shock path).
\begin{figure}\label{Figure_4}
	\centering
	\includegraphics[width=8cm,height=9cm]{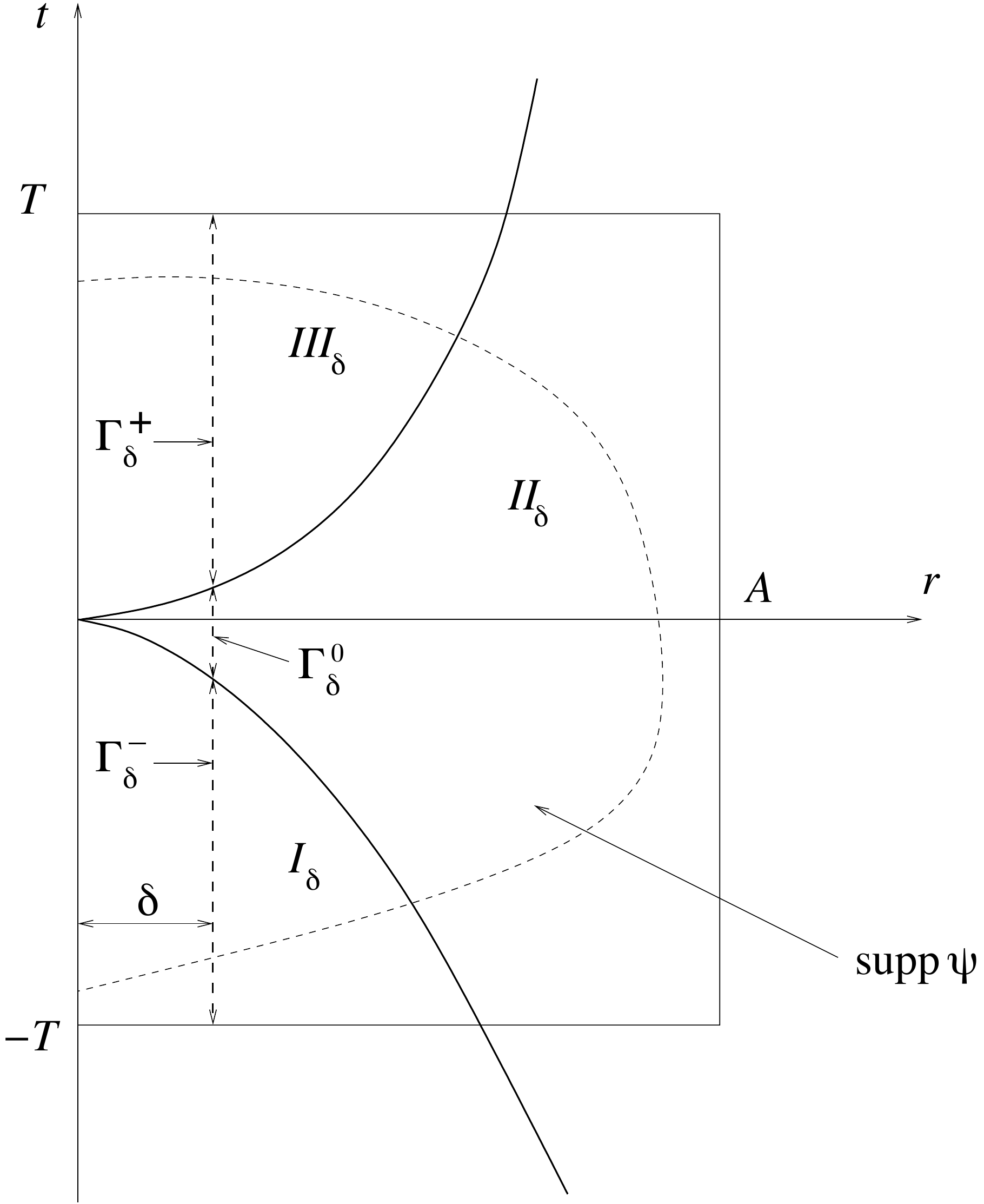}
	\caption{Regions of integration in the weak formulation.}
\end{figure}

Let ${\Gamma^-_\delta}$, ${\Gamma^0_\delta}$, and ${\Gamma^+_\delta}$ 
denote the parts of their boundaries $\partial I_\delta$, $\partial I\!I_\delta$, and $\partial I\!I\!I_\delta$, 
respectively, contained in the set $\{(t,r)\,|\,r=\delta\}$. Recall that the similarity 
shock solution is a bounded, classical solution of \eq{mass} in each of the regions 
$I_\delta$, $I\!I_\delta$, and $I\!I\!I_\delta$, and that the Rankine-Hugoniot conditions 
are satisfied\ across the incoming and outgoing shocks. Applying the divergence 
theorem therefore gives 
\[M(\psi)=M_\delta(\psi)
+\delta^m\left\{\int_{{\Gamma^0_\delta}}+\int_{{\Gamma^+_\delta}}\right\}
(\rho u\psi)(t,\delta)\,dt,\]
where we have used that $u$ vanishes along ${\Gamma^-_\delta}$.
By making the change of variables 
$t\mapsto x=t/\delta^\lambda$, we obtain
\beq\label{mass1}
	M(\psi)=M_\delta(\psi)
	-\frac{\delta^n}{\lambda}\int_{-1}^{\frac{T}{\delta^\lambda}}
	R(x)\frac{V(x)}{x}\psi(x\delta^\lambda,\delta)\,dx.
\eeq
As $R(x)$, $V(x)/x$ are globally bounded, the last term in \eq{mass1} is of order 
$\delta^{n-\lambda}$, which vanishes as $\delta\downarrow 0$ by \eq{max_lam}.
Finally, it follows from the analysis in Section \ref{cont_and_loc_integr} that both
$\rho$ and $\rho u$ belong to $L^1_{loc}(r^m drdt)$. Thus, $M_\delta(\psi)\to0$ 
as $\delta\downarrow 0$, so that $M(\psi)=0$ for each $\psi\in C^1_c(\RR\times\RR^+_0)$.
This shows that the weak form \eq{radial_mass_weak} of the radial mass equation
is satisfied.

\subsubsection{Weak form of the momentum equation}
For a fixed $\psi\in C^1_0(\RR\times\RR^+_0)$ and any $\delta>0$ we have
\begin{align}
	I(\psi)&:=\int_{\RR}\int_{\RR^+} \left(\rho u\psi_t
			+\rho u^2\psi_r+p\big(\psi_r+\textstyle\frac{m\psi}{r}\big)\right)\, r^mdrdt \nn\\
	&= \left\{\int_{\RR}\int_0^\delta +\iint_{I_\delta} +\iint_{I\!I_\delta}+\iint_{I\!I\!I_\delta}\right\}
	\left(\rho u\psi_t
			+\rho u^2\psi_r+p\big(\psi_r+\textstyle\frac{m\psi}{r}\big)\right)\,r^mdrdt \nn\\
	&=:I_\delta(\psi)
	+\left\{\iint_{I_\delta}+\iint_{I\!I_\delta}+\iint_{I\!I\!I_\delta}\right\}
	\left(\rho u\psi_t
			+\rho u^2\psi_r+p\big(\psi_r+\textstyle\frac{m\psi}{r}\big)\right)\, r^mdrdt.
\end{align}
Arguing as above and applying the divergence theorem gives ($x=t/\delta^\lambda$)
\begin{align}
	I(\psi)&=I_\delta(\psi)
	+\delta^m\left\{\int_{{\Gamma^0_\delta}}+\int_{{\Gamma^+_\delta}}\right\}
	((\rho u^2+p)\psi)(t,\delta)\,dt\nn\\
	&= I_\delta(\psi)
	+\frac{\delta^{n+1-\lambda}}{\lambda^2}\int_{-1}^{\frac{T}{\delta^\lambda}}
	R(x)\Big[\Big|\frac{V(x)}{x}\Big|^2+\frac{1}{\gamma}\Big|\frac{C(x)}{x}\Big|^2\Big]
	\psi(x\delta^\lambda,\delta)\,dx,\label{I_psi}
\end{align}
where we have used that $u$ and $p$ both vanish along ${\Gamma^-_\delta}$.
Recalling the observation in Remark \ref{psi_0_rmk}, and using global boundedness
of $R(x)(V(x)/x)^2$ and $R(x)(C(x)/x)^2$, we obtain that 
\[\delta^{n+1-\lambda}\int_{-1}^{\frac{T}{\delta^\lambda}}
	R(x)\Big[\Big|\frac{V(x)}{x}\Big|^2+\frac{1}{\gamma}\Big|\frac{C(x)}{x}\Big|^2\Big]
	\psi(x\delta^\lambda,\delta)\,dx\lesssim \delta^{n+2-2\lambda},\]
which tends to zero as $\delta\downarrow 0$ by \eq{max_lam}. Finally, to show that $I_\delta(\psi)$ also 
vanishes with $\delta$ we first use Remark \ref{psi_0_rmk} to bound the 
function $\frac{\psi}{r}$ by a constant, and then use that, according to the analysis above, 
the quantities $\rho u$, $\rho u^2$, and $p$ all belong to $L^1_{loc}(r^m drdt)$.
This shows that also $I_\delta(\psi)\to0$ as $\delta\downarrow 0$. Thus,
$I(\psi)=0$ for each $\psi\in C^1_0(\RR\times\RR^+_0)$,
showing that the weak form \eq{radial_mom_weak} of the momentum equation
is satisfied.

\subsubsection{Weak form of the energy equation}
For a fixed $\psi\in C^1_c(\RR\times\RR^+_0)$ and any $\delta>0$ we have
\begin{align}
	E(\psi)&:=\int_\RR\int_{\RR^+} \left(\rho \big(e+\textstyle\frac{u^2}{2}\big) \psi_t
			+\left[\rho \big(e+\textstyle\frac{u^2}{2}\big)+p\right] u\psi_r\right)\, r^mdrdt \nn\\
	&= \left\{\int_{\RR}\int_0^\delta +\iint_{I_\delta} +\iint_{I\!I_\delta}+\iint_{I\!I\!I_\delta}\right\}
	\left(\rho \big(e+\textstyle\frac{u^2}{2}\big) \psi_t
			+\left[\rho \big(e+\textstyle\frac{u^2}{2}\big)+p\right] u\psi_r\right)\, r^mdrdt \nn\\
	&=:E_\delta(\psi)
	+\left\{\iint_{I_\delta}+\iint_{I\!I_\delta}+\iint_{I\!I\!I_\delta}\right\}
	 \left(\rho \big(e+\textstyle\frac{u^2}{2}\big) \psi_t
			+\left[\rho \big(e+\textstyle\frac{u^2}{2}\big)+p\right] u\psi_r\right)\, r^mdrdt .
\end{align}
Arguing as above and applying the divergence theorem gives ($x=t/\delta^\lambda$)
\begin{align}
	E(\psi)&=E_\delta(\psi)
	+\delta^m\left\{\int_{{\Gamma^0_\delta}}+\int_{{\Gamma^+_\delta}}\right\}
	\left[\rho u\big(e+\frac{1}{2}u^2+\frac{p}{\rho}\big)\psi\right](t,\delta)\,dt\nn\\
	&= E_\delta(\psi)
	+\frac{\delta^{n+2-2\lambda}}{\lambda^3}\int_{-1}^{\frac{T}{\delta^\lambda}}
	R(x)\frac{V(x)}{x}\left(\frac{1}{2}\left|\frac{V(x)}{x}\right|^2+\frac{1}{\gamma-1}\left|\frac{C(x)}{x}\right|^2\right)
	\psi(x\delta^\lambda,\delta)\,dx,\label{E_psi}
\end{align}
where we have used that $u$ vanishes along ${\Gamma^-_\delta}$.
Recalling the global boundedness
of $R(x)$, $V(x)$, $V(x)/x$, and $R(x)(C(x)/x)^2$, as well as the bound 
\eq{aux}, we obtain that the last integral in \eq{E_psi} is bounded by 
\[\lesssim 1+\int_{1}^{\frac{T}{\delta^\lambda}} x^{-3}
+x^{-2(1-\frac{1}{\lambda})-1}\,dx\lesssim1+\delta^{2\lambda}
	+\delta^{2(\lambda-1)}\qquad\text{as $\delta\downarrow 0$.}\]
According to \eq{max_lam} we therefore have that the last term on the 
right-hand side of \eq{E_psi} vanishes as $\delta\downarrow 0$.
Finally, under the same constraint on $\lambda$, the argument in Section 
\ref{cont_and_loc_integr} showed that the quantities $\rho e\propto p$, 
$\rho u^2$, $\rho ue\propto up$, and $\rho u^3$, all
belong to $L^1_{loc}(r^mdrdt)$. In particular, it follows that $E_\delta(\psi)$  
vanishes as $\delta\downarrow 0$. Thus, $E(\psi)=0$ for each $\psi\in C^1_c(\RR\times\RR^+_0)$,
showing that the weak form \eq{radial_energy_weak} of the energy equation
is satisfied.

This concludes the proof of Theorem \ref{main_thm}.

\bigskip
\paragraph{\bf Acknowledgment:}
This work was supported in part by NSF awards DMS-1311353 (Jenssen) 
and DMS-1714912 (Tsikkou).

\begin{bibdiv}
\begin{biblist}
\bib{am}{book}{
   author={Atzeni, S.},
   author={Meyer-ter-Vehn, J.},  
   title={The Physics of Inertial Fusion},
   series={International Series of Monographs on Physics},
   volume={125},
   publisher={Oxford University Press, Oxford},
   date={2004},
}
\bib{ah}{article}{
   author={Axford, R. A.},
   author={Holm, D. D.},
   title={Converging finite-strength shocks },
   journal={Physica D. Nonlinear phenomena},
   volume={2},
   date={1981},
   number={1},
   pages={194--202},
}
\bib{bg_96}{article}{
   author={Bilbao, L. E.},
   author={Gratton, J.},
   title={Spherical and cylindrical convergent shocks},
   journal={Il Nuovo Cimento D},
   volume={18},
   date={1996},
   number={9},
   pages={1041--1060},
}
\bib{bk}{article}{
   author={Bru\v slinski\u\i , K. V.},
   author={Ka\v zdan, Ja. M.},
   title={Self-similar solutions of certain problems in gas dynamics},
   language={Russian},
   journal={Uspehi Mat. Nauk},
   volume={18},
   date={1963},
   number={2 (110)},
   pages={3--23},
   issn={0042-1316},
   review={\MR{0172577}},
}
\bib{cp}{article}{
   author={Chen, Gui-Qiang G.},
   author={Perepelitsa, Mikhail},
   title={Vanishing viscosity solutions of the compressible Euler equations
   with spherical symmetry and large initial data},
   journal={Comm. Math. Phys.},
   volume={338},
   date={2015},
   number={2},
   pages={771--800},
   issn={0010-3616},
   review={\MR{3351058}},
}
\bib{cs}{article}{
   author={Chen, Gui-Qiang G.},
   author={Schrecker, Matthew R. I.},
   title={Vanishing Viscosity Approach to the Compressible Euler 
   Equations for Transonic Nozzle and Spherically Symmetric Flows},
   journal={Arch. Ration. Mech. Anal.},
   date={2018},
   doi={https://doi.org/10.1007/s00205-018-1239-z}
}
\bib{cf}{book}{
   author={Courant, R.},
   author={Friedrichs, K. O.},
   title={Supersonic flow and shock waves},
   note={Reprinting of the 1948 original;
   Applied Mathematical Sciences, Vol. 21},
   publisher={Springer-Verlag},
   place={New York},
   date={1976},
}
\bib{daf}{book}{
   author={Dafermos, Constantine M.},
   title={Hyperbolic conservation laws in continuum physics},
   series={Grundlehren der Mathematischen Wissenschaften [Fundamental
   Principles of Mathematical Sciences]},
   volume={325},
   edition={4},
   publisher={Springer-Verlag, Berlin},
   date={2016},
   pages={xxxviii+826},
   isbn={978-3-662-49449-3},
   isbn={978-3-662-49451-6},
   review={\MR{3468916}},
}
\bib{glimm}{article}{
   author={Glimm, James},
   title={Solutions in the large for nonlinear hyperbolic systems of
   equations},
   journal={Comm. Pure Appl. Math.},
   volume={18},
   date={1965},
   pages={697--715},
   issn={0010-3640},
   review={\MR{0194770}},
}
\bib{gr_96}{book}{
   author={Godlewski, Edwige},
   author={Raviart, Pierre-Arnaud},
   title={Numerical approximation of hyperbolic systems of conservation
   laws},
   series={Applied Mathematical Sciences},
   volume={118},
   publisher={Springer-Verlag, New York},
   date={1996},
   pages={viii+509},
   isbn={0-387-94529-6},
   review={\MR{1410987}},
   doi={10.1007/978-1-4612-0713-9},
}
\bib{gud}{article}{
   author={Guderley, G.},
   title={Starke kugelige und zylindrische Verdichtungsst\"osse in der N\"ahe
   des Kugelmittelpunktes bzw. der Zylinderachse},
   language={German},
   journal={Luftfahrtforschung},
   volume={19},
   date={1942},
   pages={302--311},
   review={\MR{0008522}},
}
\bib{hoff}{article}{
   author={Hoff, David},
   title={Spherically symmetric solutions of the Navier-Stokes equations for
   compressible, isothermal flow with large, discontinuous initial data},
   journal={Indiana Univ. Math. J.},
   volume={41},
   date={1992},
   pages={1225--1302},
}
\bib{hun_60}{article}{
   author={Hunter, C.},
   title={On the collapse of an empty cavity in water},
   journal={J. Fluid Mech.},
   volume={8},
   date={1960},
   pages={241--263},
}
\bib{kell}{article}{
   author={Keller, J. B.},
   title={Spherical, cylindrical and one-dimensional gas flows},
   journal={Quart. Appl. Math.},
   volume={14},
   date={1956},
   pages={171--184},
}
\bib{mmu}{article}{
   author={Makino, Tetu},
   author={Mizohata, Kiyoshi},
   author={Ukai, Seiji},
   title={The global weak solutions of compressible Euler equation with
   spherical symmetry},
   journal={Japan J. Indust. Appl. Math.},
   volume={9},
   date={1992},
   number={3},
   pages={431--449},
   issn={0916-7005},
   review={\MR{1189949}},
}
\bib{L}{article}{
   author={Lazarus, Roger B.},
   title={Self-similar solutions for converging shocks and collapsing
   cavities},
   journal={SIAM J. Numer. Anal.},
   volume={18},
   date={1981},
   number={2},
   pages={316--371},
}
\bib{L_errat}{article}{
   author={Lazarus, Roger B.},
   title={Erratum: ``Self-similar solutions for converging shocks and
   collapsing cavities''\ [SIAM J. Numer. Anal. {\bf 18} (1981), no. 2,
   316--371;\ MR 82i:76054]},
   journal={SIAM J. Numer. Anal.},
   volume={19},
   date={1982},
   number={5},
   pages={1090},
   issn={0036-1429},
   review={\MR{672580}},
   doi={10.1137/0719079},
}
\bib{liu77}{article}{
   author={Liu, Tai Ping},
   title={Initial-boundary value problems for gas dynamics},
   journal={Arch. Rational Mech. Anal.},
   volume={64},
   date={1977},
   number={2},
   pages={137--168},
   issn={0003-9527},
   review={\MR{0433017}},
   doi={10.1007/BF00280095},
}
\bib{phpm}{article}{
   author={Ponchaut, N. F.},
   author={Hornung, H. G.},
   author={Pullin, D. I.},
   author={Mouton, C. A.},
   title={On imploding cylindrical and spherical shock waves in a perfect
   gas},
   journal={J. Fluid Mech.},
   volume={560},
   date={2006},
   pages={103--122},
   issn={0022-1120},
   review={\MR{2265076}},
}
\bib{rkb_12}{article}{
   author={Ramsey, Scott D.},
   author={Kamm, James R.},
   author={Bolstad, John H.},
   title={The Guderley problem revisited},
   journal={Int. J. Comput. Fluid Dyn.},
   volume={26},
   date={2012},
   number={2},
   pages={79--99},
   issn={1061-8562},
   review={\MR{2892836}},
   doi={10.1080/10618562.2011.647768},
}
\bib{RichtL_75}{article}{
   author={Richtmyer, R. D.},
   author={Lazarus, R. B.},
   title={Singularity Fitting in Hydrodynamical Calculations II},
   journal={Los Alamos Scientific Laboratory},
   volume={LA-6108-MS},
   pages={16 pp.},
   place={Los Alamos, New Mexico},
   date={1975},
}
\bib{RL_78}{article}{
   author={Rodriguez, Manuel},
   author={Li\~n\'an, Amable},
   title={Implosiones autosemejantes isentr\'opicas y no isentr\'opicas.},
   journal={Junta de Energia Nuclear,},
   volume={J.E.N. 405},
   pages={149 pp.},
   place={Madrid, Spain},
   date={1978},
}
\bib{rj}{book}{
   author={Ro{\v{z}}destvenski{\u\i}, B. L.},
   author={Janenko, N. N.},
   title={Systems of quasilinear equations and their applications to gas
   dynamics},
   series={Translations of Mathematical Monographs},
   volume={55},
   note={Translated from the second Russian edition by J. R. Schulenberger},
   publisher={American Mathematical Society},
   place={Providence, RI},
   date={1983},
}
\bib{sed}{book}{
   author={Sedov, L. I.},
   title={Similarity and dimensional methods in mechanics},
   note={Translated from the Russian by V. I. Kisin},
   publisher={``Mir''},
   place={Moscow},
   date={1982},
}
\bib{stan}{book}{
   author={Stanyukovich, K. P.},
   title={Unsteady motion of continuous media},
   series={Translation edited by Maurice Holt; literal translation by J.
   George Adashko},
   publisher={Pergamon Press},
   place={New York},
   date={1960},
}
\bib{temple81}{article}{
   author={Temple, J. Blake},
   title={Solutions in the large for the nonlinear hyperbolic conservation
   laws of gas dynamics},
   journal={J. Differential Equations},
   volume={41},
   date={1981},
   number={1},
   pages={96--161},
   issn={0022-0396},
   review={\MR{626623}},
   doi={10.1016/0022-0396(81)90055-3},
}
\bib{vrt}{article}{
   author={Vallet, A.},
   author={Ribeyre, X.},
   author={Tikhonchuk, V.},
   title={Finite Mach number spherical shock wave, application to shock ignition},
   journal={Physics of Plasmas},
   volume={20},
   date={2013},
   pages={082702},
}
\bib{welsh}{article}{
   author={Welsh, R. L.},
   title={Imploding shocks and detonations},
   journal={J. Fluid Mech.},
   volume={29},
   date={1967},
   pages={61--79},
}
\end{biblist}
\end{bibdiv}

\end{document}